\documentclass[12pt]{amsart}
\usepackage{amsmath}
\usepackage{amsfonts}
\usepackage{amsthm}
\usepackage{amssymb}
\usepackage{amscd}
\usepackage[all]{xy}
\usepackage{enumerate}

\textheight22truecm
\textwidth17truecm
\oddsidemargin-0.5truecm
\evensidemargin-0.5truecm

\keywords{Extension class of a vector bundle, canonical map, holomorphic forms,
Albanese variety, families of varieties, infinitesimal invariant.} 

\subjclass{14C34, 14D07, 14E99, 14J10, 14J40.}

\pagestyle{myheadings}

\theoremstyle{plain}
\newtheorem{thm}{Theorem}[subsection]

\newtheorem{prop}[thm]{Proposition}

\newtheorem{cor}[thm]{Corollary}

\newtheorem{lem}[thm]{Lemma}

\theoremstyle{definition}
\newtheorem{defn}[thm]{Definition}

\newtheorem*{ackn}{Acknowledgment}

\newtheorem{rmk}[thm]{Remark}
\newcommand{\sA}{\mathcal{A}}
\newcommand{\sB}{\mathcal{B}}
\newcommand{\sC}{\mathcal{C}}
\newcommand{\sD}{\mathcal{D}}
\newcommand{\sE}{\mathcal{E}}
\newcommand{\sF}{\mathcal{F}}

\newcommand{\sI}{\mathcal{I}}

\newcommand{\sK}{\mathcal{K}}
\newcommand{\sL}{\mathcal{L}}

\newcommand{\sO}{\mathcal{O}}

\newcommand{\sX}{\mathcal{X}}

\newcommand{\sZ}{\mathcal{Z}}

\newcommand{\mC}{\mathbb{C}}

\newcommand{\mP}{\mathbb{P}}

\newcommand{\mZ}{\mathbb{Z}}
\newcommand{\mR}{\mathbb{R}}

\newcommand{\Ker}{\mathrm{Ker}\,}

\usepackage{color}
\newcommand{\ThfC}[2]{\mathrm{\theta_{#1}^{#2}}} 
\newcommand{\Ab}{\mathrm{A}} 
\newcommand{\AD}{\mathcal{D}} 
\newcommand{\AL}{\mathcal{L}} 
\newcommand{\LATT}{\Lambda} 
\newcommand{\VSA}{V} 
\newcommand{\ConnHom}{f}    
\newcommand{\bigslant}[2]{{\raisebox{.2em}{$#1$}\left/\raisebox{-.2em}{$#2$}\right.}}  

\newcommand{\IMh}[1]{\Im m \ #1} 


\numberwithin{equation}{section}

\newcommand{\beba}  {\begin{equation}\begin{array}{rcl}}

\newcommand{\eaee}  {\end{array}\end{equation}}

\makeatletter
\def\l@section{\@tocline{1}{0pt}{1pc}{}{}}
\def\l@subsection{\@tocline{2}{0pt}{1pc}{4.6em}{}}
\def\l@subsubsection{\@tocline{3}{0pt}{1pc}{7.6em}{}}
\renewcommand{\tocsection}[3]{%
  \indentlabel{\@ifnotempty{#2}{\makebox[2.3em][l]{%
    \ignorespaces#1 #2.\hfill}}}#3}
\renewcommand{\tocsubsection}[3]{%
  \indentlabel{\@ifnotempty{#2}{\hspace*{2.3em}\makebox[2.3em][l]{%
    \ignorespaces#1 #2.\hfill}}}#3}
\renewcommand{\tocsubsubsection}[3]{%
  \indentlabel{\@ifnotempty{#2}{\hspace*{4.6em}\makebox[3em][l]{%
    \ignorespaces#1 #2.\hfill}}}#3}
\makeatother

\setcounter{tocdepth}{4}


\title{On Birationally trivial families and Adjoint quadrics}

\author{Luca Cesarano}
\address{Mathematisches Institut\\
Universit\"at Bayreuth\\
\texttt{luca.cesarano@uni-bayreuth.de}}

\author{Luca Rizzi}
\address{DMIF \\
Universit\`a di Udine\\
Udine, 33100 Italia\\
\texttt{rizzi.luca@spes.uniud.it}}

\author{Francesco Zucconi}

\address{DMIF \\
Universit\`a di Udine\\
Udine, 33100 Italia\\
\texttt{Francesco.Zucconi@dimi.uniud.it}}

\begin{document}

\markboth{Rizzi and Zucconi}{Birationally trivial families and Adjoint quadrics }

\begin{abstract} Let $\pi\colon \sX\to B$ be a family whose general fiber $X_b$ gives a $(d_1,...,d_a)$ polarisation of a general Abelian variety where $1\leq d_i\leq 2$, $i=1,...,a$ and $a\geq 4$. We show that the fibers are in the same birational class if all the $(m,0)$ forms on $X_b$ are liftable to $(m,0)$ forms on $\sX$ where $m=1$ and $m=a-1$. Actually we show general criteria to find families with fibers in the same birational class,  which leads together with a famous theorem of Nori to some interesting applications.
 \end{abstract}
\maketitle

\section{Introduction}
A family of $n$-dimensional varieties is a flat, smooth 
proper morphism $\pi\colon\sX\rightarrow B$ such that the {\it{fiber}} $X_b :=\pi^{-1}(b)$ over a point  $b\in B$ has dimension $n$. In this paper, we only assume that $B${\it{ is a smooth connected open variety of dimension $1$.}} We will also assume that $X_{b}$ is an 
irregular smooth variety of general type such that its Albanese morphism ${\rm{alb}}(X_b):X_{b}\rightarrow {\rm{Alb}}(X_{b})$ is of degree $1$. We want to study conditions which ensure that the fibers of $\pi\colon\sX\rightarrow B$ have the same birational type. 

It is well-known that, up to base change, we can associate to  $\pi\colon\sX\rightarrow B$ the family of corresponding Albanese varieties. In fact, we can work in the more general set up of families of Albanese type presented in \cite[Definition 1.1.1]{PZ}. We recall below the basic definition.

Let 
$p\colon \sA\rightarrow B$ be a family of Abelian varieties; that is, the fiber $A_{b}:=p^{-1}(b)$ is an Abelian variety
of dimension $a>0$. We say that a morphism
$\Phi\colon \sX{\rightarrow}\sA$ is a {\it family of Albanese type} over $B$ if:
\begin{enumerate} 
\item $\Phi$ fits into the
commutative diagram:
$$
\xymatrix{
& \sX \ar[dl]_{\Phi} \ar[dr]^{\pi}&& 
 & \\
 \sA&\stackrel{p}{\longrightarrow} & B & &  }
$$
\item the induced map
$\phi_{b}\colon X_{b}\rightarrow A_{b}$ of $\Phi$ on
$X_{b}$ is birational onto its image $Z_{b}$;
\item the cycle $Z_{b}$ generates the fiber $A_{b}$ as a group.
\end{enumerate}

A family of Albanese type comes equipped with a global object: its relative homologically trivial cycle. Indeed let
$-{\rm{Id}}_{\sA}\colon \sA\rightarrow\sA$ be the natural involution induced on
$p:\sA\rightarrow B$ by the multiplication by $(-1)$ on the fibers. The
composition $(-{\rm{Id}}_{\sA})\circ\Phi\colon \sX\rightarrow\sA$ is an Albanese type family. We set
$(-{\rm{Id}}_{\sA})\circ\Phi:=\Phi^{-}$. Then we can construct two cycles $[\Phi\colon \sX\rightarrow \sA]$ and $[\Phi^{-}\colon\sX\rightarrow \sA]$ in the relative group $Z^{a-n}_{}(\sA/B)$.
We define: $[\sX]^{+}:=[\Phi\colon \sX\rightarrow \sA]$ and
$[\sX]^{-}:=[\Phi^{-}:\sX\rightarrow
\sA]$. The following will be called
the {\it{basic cycle of the Albanese type family}} $\Phi\colon \sX{\rightarrow}\sA$:
\begin{equation}\label{cnstccl}
[\sZ]=[\sX]^{+}-[\sX]^{-}.
\end{equation}\noindent
It is well-known that the cycle $[\sZ]$ is relatively homologically trivial; that is $[\sZ]\in Z^{a-n}_{h}(\sA/B)$. By the theory of normal functions and its infinitesimal invariant $\delta_{\sZ}$, see: \cite{grif1}, \cite{grif2}, \cite{Vo}, we can split out Albanese type families into two types: those whose  infinitesimal invariant is non zero and those which have $\delta_{\sZ}=0$. These last are called {\it{Nori trivial families}}.

Another information carried by the morphism $\phi_{b}\colon X_{b}\rightarrow A_{b}$ is a splitting of $H^{n,0}(X_b)$. Indeed let $\phi_b^{\star} \colon H^{n,0}(A)\to H^{n,0}(X_b)$ and set $V_b:= {\rm{Im}}(\phi_b^{\star})$. Inside the dual $H^{0,n}(X_b)$ of $H^{n,0}(X_b)$ we can define:
$$
{\rm{Ann}}(V_b):=\{\tau\in H^{0,n}(X_b)\mid \int_{X_b} \phi_{b}^{\star}(\mu)\wedge\tau=0, \,\,\forall \mu\in H^{n,0}(A) \}
$$
and we know that 
\begin{equation}\label{barra}
H^{0,n}(X_b)={\overline{V_b}}\oplus {\rm{Ann}}(V_b)
\end{equation}
where ${\overline{V_b}}\subset H^{0,n}(X)$ is the conjugate space of $V_b$. It also holds:
\begin{equation}\label{senzabarra}
H^{n,0}(X_b)=V_b\oplus {\overline{{\rm{Ann}}(V_b)}}
\end{equation}
The standard multiplication map $H^{n,0}(X_b)\otimes H^{n,0}(X_b)\to H^0(X,\omega_{X_b}^{\otimes 2})$ induces an homomorphism
\begin{equation}\label{iniettivobar}
\nu_{X_b}\colon {\rm{Sym}}^{2}{\overline{{\rm{Ann}}(V_b)}}\to H^0(X,\omega_{X_b}^{\otimes 2})
\end{equation}

Finally, we say that a family of relative dimension $n$ satisfies {\it{extremal liftability assumptions}} if the natural restriction homomorphisms $H^0({\sX},\Omega_{{\sX}}^1)
\to H^0(X_{b},\Omega_{X_{b}}^1)$ and $H^0({\sX},\Omega_{{\sX}}^n)
\to H^0(X_{b},\Omega_{X_{b}}^n)$ are surjective for every $b\in B$. We show:

\medskip

\noindent
{\bf{Main Theorem}} {\it{Let  $\Phi\colon\sX\rightarrow\sA$ be a Nori family. If it satisfies extremal liftability assumptions and $\nu_{X_b}\colon {\rm{Sym}}^{2}{\overline{{\rm{Ann}}(V_b)}}\to H^0(X,\omega_{X_b}^{\otimes 2})$ is injective for a general $b\in B$, then its fibers belong to the same birational class.}}
\medskip

The proof is a direct consequence of the new notion of adjoint quadric introduced in \cite{RZ}. By extremal liftability assumptions we are actually concerned on families of varieties equipped with a morphism to a fixed abelian variety; see  Proposition \ref{insalata}. Nevertheless our result should be seen in the light of the theory of families of varieties of general type as described in \cite{Ko}; this is the reason why we present the theorem in the above general set up. We strongly rely on the theory exposed in \cite{PZ}. In particular, we use the Volumetric Theorem \cite[Theorem 1.5.3]{PZ}. 

As an immediate consequence of the Main Theorem and of the well-known fact that if $C$ is a hyperelliptic curve then $C-C^{-}$ is trivial in its Jacobian, we have the well-known Torelli Theorem for hyperelliptic deformations of hyperelliptic curves, see: \cite{OS}; this is a case where ${\overline{{\rm{Ann}}(V_b)}}=0$.

More deeply, by a famous Theorem of Nori \cite [pp 372]{N}, see also \cite{Fa}, our Main Theorem applies to the case where the family $\sX$ is given by a family of cycles inside a general Abelian variety of dimension $a\geq 4$ such that the homomorphism $\nu_{X_b}\colon {\rm{Sym}}^{2}{\overline{{\rm{Ann}}(V_b)}}\to H^0(X,\omega_{X_b}^{\otimes 2})$ is injective. In particular we have:

\newpage
\noindent {\bf{Theorem [A]}} {\it{Let $(A,\sL)$ be a $(d_1,d_2,..., d_a)$ polarised Abelian variety which is general inside its moduli space and $1\leq d_i\leq 2$, $i=1,..., a$. Let 
$\pi\colon \sX\to B$ be a family which satisfies extremal liftability conditions and such that its general fiber admits a degree-$1$ morphism $\phi_b\colon X_b\to A$. If the image $Z_b$ is an element of the linear system $|\sL|$ then the fibers belong to the same birational class.}}

Theorem [A], see: Subsection (\ref{Theorem A}), follows by the Main Theorem and by a careful study of the map $\nu_{X_b}\colon {\rm{Sym}}^{2}{\overline{{\rm{Ann}}(V_b)}}\to H^0(X,\omega_{X_b}^{\otimes 2})$ where $X_b$ is an element of the linear system associated with a $(d_1,d_2,..., d_a)$ polarisation. We expect further generalisations of Theorem [A].

Finally, we have an application to the case of fibrations with maximal relative irregularity of the above circle of ideas.

Let $S$, $B$ be respectively a smooth surface and a smooth curve. A fibration $f\colon S\to B$ is said to be of maximal relative irregularity if $q(S)-g(B)=g(F)-1$ where $q(S)$ is the irregularity of $S$ and $g(B),g(F)$ are respectively the genus of $B$ and of a general fiber $F$.
There are many papers on this topic. Here we can quote \cite{P}, \cite{M} and \cite{BGN} which also contains basic references to this problem. In this case we have an occurrence of  the case where ${\overline{{\rm{Ann}}(V_b)}}\simeq \mathbb C$. Indeed if the natural morphism $F\to {\rm{Alb}}(S)$ has degree $1$ then we can find a suitable open subscheme $U\subset B$ contained in the locus where $f\colon S\to B$ is smooth, and we can form a family of Albanese type $\Phi_U\colon S_U\to\sA_U$ where $p\colon \sA\to U$ is such that all the fibers are isomorphic to a fixed Abelian variety $A$ of dimension $g(F)-1$ and $S_U:=f^{-1}(U)$. In Theorem \ref{vaiinfine}, which does not depend on the Main Theorem, we show that the infinitesimal invariant associated to the basic cycle associated to $f_{|S_U}\colon S_U\to U$ is not zero.

\begin{ackn}
This research is supported by DIMAGeometry PRIDZUCC. The authors want to thank Gian Pietro Pirola because he pointed out two unclear deductions  contained in a preliminary version of this work  and Claire Voisin for her useful advices. They also want to thank Ingrid Bauer, Fabrizio Catanese, and Yujiro Kawamata to have had the opportunity to show some of their results in the workshop "Differential, Algebraic and Topological Methods in Complex Algebraic Geometry" Grand Hotel San Michele, Cetraro, September 6-15, 2018. 
\end{ackn}

\section{Adjoint quadrics}

We recall some of the results of \cite{PZ}. See also \cite{RZ}.

\subsection{The Adjoint Theorem}
\label{sezione1}
\subsubsection{The Gauss-type homomorphism}
Let $X$ be a compact complex
smooth variety of dimension $m$ and let $\mathcal{F}$ be a locally free sheaf of rank $n$. 
Fix an extension class $\xi\in \text{Ext}^1(\mathcal{F},\mathcal{O}_X)$ associated to the exact sequence:
\begin{equation}\label{unouno}
0\to\mathcal{O}_X\stackrel{d\epsilon}{\longrightarrow} \mathcal{E}\stackrel{\rho_1}{\longrightarrow} \mathcal{F}\to 0.
\end{equation}
By the Koszul resolution associated to the section $d\epsilon\in H^0(X,\sE)$ and by the isomorphisms 
\begin{equation*}
\text{Ext}^1(\sF,\sO_X)\cong \text{Ext}^1(\bigwedge^i\sF,\bigwedge^{i-1}\sF)
\end{equation*} we see that the coboundary homomorphisms
\begin{equation*}
\partial_\xi^i\colon H^0(X,\bigwedge^i\mathcal{F})\to H^1(X,\bigwedge^{i-1}\mathcal{F})
\end{equation*} are computed by the cup product with $\xi$, $i=1,\dots,n$.

Denote by $H^n_{d\epsilon}\colon \det\mathcal{E}\to \det\mathcal{F}$ the natural isomorphism and by $\Lambda^{n+1}$ the natural map
\begin{equation}
\label{lambdan+1}
\Lambda^{n+1}\colon \bigwedge^{n+1}H^0(X,\mathcal{E})\to H^0(X,\det\mathcal{E}).
\end{equation} By composition we define a Gauss-type homomorphism:
\begin{equation}
\Lambda:=H^{n}_{d\epsilon}\circ \Lambda^{n+1}\colon \bigwedge^{n+1}H^0(X,\mathcal{E})\to H^0(X,\det\mathcal{F}).
\end{equation}

\subsubsection{Adjoint forms} Let $W\subset \Ker(\partial_\xi^1)\subset H^0(X,\mathcal{F})$ be a vector subspace of
dimension $n+1$ and let $\mathcal{B}:=\{\eta_1,\ldots,\eta_{n+1}\}$
be a basis of $W$. By definition we can take liftings 
$s_1,\dots,s_{n+1}\in H^0(X,\mathcal{E})$ such that $\rho_1(s_i)=\eta_i$, $i=1,...,n+1$.

\begin{defn}
The section
\begin{equation*}
\omega_{\xi,W,\mathcal{B}}:=\Lambda(s_1\wedge\ldots\wedge s_{n+1})\in H^0(X,\det\mathcal{F}).
\end{equation*} 
is called \emph{an adjoint form} of $\xi,W,\mathcal{B}$.
\end{defn}

If we consider the natural map
\begin{equation*}
\lambda^n\colon \bigwedge^{n}H^0(X,\mathcal{F})\to H^0(X,\det\mathcal{F}),
\end{equation*} we can define the subspace $\lambda^n W\subset H^0(X,\det\mathcal{F})$ generated by
\begin{equation*}
\omega_i:=\lambda^n(\eta_1\wedge\ldots\wedge\widehat{\eta_i}\wedge\ldots\wedge\eta_{n+1})
\end{equation*} for $i=1,\dots,n+1$.

\begin{defn}
The class
\begin{equation*}
[\omega_{\xi,W,\mathcal{B}}]\in \frac{H^0(X,\det\mathcal{F})}{\lambda^nW}
\end{equation*} is called \emph{the Massey product} of $W$ along $\xi$.
\end{defn}
\medskip

In the literature $[\omega_{\xi,W,\mathcal{B}}]$ is also called the adjoint image of $W$ by $\xi$. For the main properties of Massey products in our context see \cite{CP}, \cite{PZ}, \cite{RZ}.

\begin{defn}\label{fissato}
If $\lambda^nW$ is nontrivial we denote by $|\lambda^n W|\subset \mP(H^0(X,\det\mathcal{F}))$ the induced sublinear system. We call $D_W$ the fixed divisor of this linear system and $Z_W$ the base locus of its moving part $|M_W|\subset\mP(H^0(X,\det\mathcal{F}(-D_W)))$.
\end{defn}

From the natural map
$\epsilon_{D_{W}}\colon \mathcal{F}(-D_W)\to\mathcal{F}$ we have the induced homomorphism in cohomology:
\begin{equation*}
H^1(X,\mathcal{F}^\vee)\stackrel{\epsilon_{D_{W}}}{\longrightarrow} H^1(X,\mathcal{F}^\vee(D_W)).
\end{equation*} We set
$$\xi_{D_W}:=\epsilon_{D_{W}}(\xi).$$

\begin{defn} We say that $\xi\in H^1(X,\mathcal{F}^\vee)$ {\it{is supported on $D_W$ }}if  $\xi_{D_W}=0$
\end{defn}

In \cite[Theorem 1.5.1]{PZ} we have shown:
\begin{thm}[Adjoint Theorem]
\label{teoremaaggiunta}
Let $X$ be a compact $m$-dimensional complex smooth variety. 
Let $\mathcal{F}$ be a rank $n$ locally free sheaf on $X$ and $\xi\in 
H^1(X,\mathcal{F}^\vee)$ the extension class of the exact sequence
(\ref{unouno}). Let $W$ be a $n+1$-dimensional subspace of $\Ker(\partial_\xi^1)\subset 
H^0(X,\mathcal{F})$ and $\omega$ one of its adjoint forms.
If the Massey product $[\omega]=0$ then $\xi$ is supported on $D_W$.
\end{thm}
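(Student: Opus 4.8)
The plan is to deduce $\xi_{D_W}=0$ by an explicit \v{C}ech computation that trivialises a cocycle representing $\xi_{D_W}$ in $H^1(X,\mathcal{F}^\vee(D_W))$. We may assume $\lambda^nW\neq0$ (otherwise $D_W$ is not defined and there is nothing to prove), equivalently that $\eta_1,\dots,\eta_{n+1}$ span $\mathcal{F}$ at the generic point. The first ingredient is the elementary identity
\[
\sum_{i=1}^{n+1}(-1)^{i}\,\eta_i\otimes\omega_i=0\qquad\text{in }H^0\!\big(X,\mathcal{F}\otimes\det\mathcal{F}\big),
\]
valid because in a local frame of $\mathcal{F}$ the $\omega_i$ are, up to sign, the maximal minors of the matrix of $\mathrm{ev}\colon\mathcal{O}_X^{\oplus(n+1)}\to\mathcal{F}$, $e_i\mapsto\eta_i$, and the relation is the Laplace expansion of a square matrix with a repeated row. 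Hence the dual $\mathrm{ev}^\vee\colon\mathcal{F}^\vee\to\mathcal{O}_X^{\oplus(n+1)}$, $\phi\mapsto(\phi(\eta_i))_i$, factors through $N:=\Ker\!\big(\mathcal{O}_X^{\oplus(n+1)}\xrightarrow{\,(\omega_i)\,}\det\mathcal{F}\big)$, and over the locus where the $\eta_i$ span it identifies $\mathcal{F}^\vee$ with $N$.

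Next I compute the adjoint form locally. Fix an open cover $\{U_\alpha\}$ together with splittings $\sigma_\alpha\colon\mathcal{F}|_{U_\alpha}\to\mathcal{E}|_{U_\alpha}$ of $\rho_1$; on overlaps $\sigma_\alpha-\sigma_\beta=\psi_{\alpha\beta}\,d\epsilon$ with $\{\psi_{\alpha\beta}\}\subset\mathcal{F}^\vee(U_{\alpha\beta})$ a cocycle representing $\xi$. Write each global lift as $s_i|_{U_\alpha}=a^\alpha_i\,d\epsilon+\sigma_\alpha(\eta_i)$ with $a^\alpha_i\in\mathcal{O}(U_\alpha)$; matching on overlaps forces $a^\alpha_i-a^\beta_i=-\psi_{\alpha\beta}(\eta_i)$. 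When one expands $s_1\wedge\dots\wedge s_{n+1}$, the term with no factor $d\epsilon$ vanishes since $\bigwedge^{n+1}\mathcal{F}=0$ and the terms with two or more vanish since $d\epsilon\wedge d\epsilon=0$, so only the single-$d\epsilon$ terms survive; applying $H^n_{d\epsilon}$ then gives
\[
\omega|_{U_\alpha}=\sum_{i=1}^{n+1}(-1)^{i-1}a^\alpha_i\,\omega_i\qquad\text{on }U_\alpha ,
\]
whose patching to the global section $\omega$ is precisely the identity of the first paragraph together with $a^\alpha_i-a^\beta_i=-\psi_{\alpha\beta}(\eta_i)$.

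Now invoke the hypothesis $[\omega]=0$, that is $\omega=\sum_i b_i\omega_i$ with $b_i\in\mathbb{C}$. Putting $g^\alpha_i:=(-1)^{i-1}a^\alpha_i-b_i$ we get the syzygy $\sum_i g^\alpha_i\,\omega_i=0$ on $U_\alpha$, i.e. $\gamma_\alpha:=(g^\alpha_i)_i\in H^0(U_\alpha,N)$, with $\gamma_\alpha-\gamma_\beta=\pm\,\mathrm{ev}^\vee(\psi_{\alpha\beta})$. It remains to invert $\mathrm{ev}^\vee$ on these local sections: on the open set $\{\omega_k\neq0\}$ the unique $\phi\in\mathcal{F}^\vee$ with $\phi(\eta_j)=g^\alpha_j$ is given by Cramer's rule with denominator $\omega_k$, and these rational solutions agree wherever both are defined. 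Because by Definition \ref{fissato} $D_W$ is the fixed divisor of $|\lambda^nW|$ — so $\omega_i=s_{D_W}\omega'_i$ with the $\omega'_i$ without common divisor — the poles of $\phi$ along $D_W$ are exactly absorbed by $\mathcal{O}_X(D_W)$; and since $\mathcal{F}^\vee(D_W)$ is locally free and the base locus $Z_W$ of the moving part has $\codim\geq2$, these local pieces extend to $\tau_\alpha\in H^0(U_\alpha,\mathcal{F}^\vee(D_W))$. Finally $\tau_\alpha-\tau_\beta$ and $\mp\,s_{D_W}\psi_{\alpha\beta}$ take the same values on every $\eta_j$, hence coincide (the $\eta_j$ span generically); thus the cocycle $\{s_{D_W}\psi_{\alpha\beta}\}$, which represents $\xi_{D_W}$, is a coboundary, so $\xi_{D_W}=0$, i.e. $\xi$ is supported on $D_W$.

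The crux is the second paragraph — identifying the adjoint form, locally, as the combination $\sum(-1)^{i-1}a^\alpha_i\omega_i$ of the $\omega_i$ weighted by the ``vertical components'' $a^\alpha_i$ of the lifts $s_i$ — together with the Cramer-and-extension step of the last paragraph, where it is exactly the meaning of $D_W$ as the fixed divisor of $|\lambda^nW|$ that forces $\phi$ into $\mathcal{F}^\vee(D_W)$ and not into a larger twist.
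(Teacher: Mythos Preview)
Your argument is essentially sound and gives a direct, self-contained proof; the paper itself does not prove this statement but simply quotes it from \cite[Theorem~1.5.1]{PZ}. The strategy --- computing the adjoint form locally as $\sum_i(-1)^{i-1}a_i^\alpha\,\omega_i$, turning the hypothesis $[\omega]=0$ into local syzygies $(g_i^\alpha)$ among the $\omega_i$, and then inverting the evaluation map via Cramer's rule to produce a \v{C}ech $0$-cochain in $\mathcal{F}^\vee(D_W)$ trivialising $\xi_{D_W}$ --- is correct in outline and pleasingly explicit.

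There is, however, one genuine slip that your ``$\pm$'' does not cover. The Laplace identity you prove is $\sum_i(-1)^i\eta_i\otimes\omega_i=0$, which says that $\phi\mapsto\big((-1)^i\phi(\eta_i)\big)_i$ lands in your $N$, \emph{not} $\phi\mapsto(\phi(\eta_i))_i$. Because of this, the overdetermined system $\phi(\eta_j)=g_j^\alpha$ (all $j$) is in general inconsistent: if on $\{\omega_k\neq0\}$ you solve the $n$ equations with $j\neq k$ to obtain $\phi_k$, the remaining equation $\phi_k(\eta_k)=g_k^\alpha$ fails, and consequently the local pieces $\phi_k$, $\phi_l$ do not agree on overlaps. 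The signs here are $j$-dependent, not a single global $\pm$. The remedy is to set $\tau_\alpha(\eta_j)=(-1)^{j-1}g_j^\alpha$ instead; then the $k$-th equation follows from the other $n$ together with $\sum_j g_j^\alpha\omega_j=0$, the local solutions patch across the sets $\{\omega_k\neq0\}$, and one computes $(\tau_\alpha-\tau_\beta)(\eta_j)=-\psi_{\alpha\beta}(\eta_j)$ for every $j$ with a single global sign. With this correction, your Cramer-and-Hartogs extension step goes through exactly as written and yields $\xi_{D_W}=0$.
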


\subsection{The notion of Adjoint quadric}
We denote by  $\lambda^{n}H^0(X,\sF)$ the image of $$\lambda^n\colon \bigwedge^n  H^0(X,\sF)\to  H^0(X,\det\sF)$$ and we consider the linear subsystem
$\mP(\lambda^{n}H^0(X,\sF))$ of $|\det\sF|$. Denote by $D_\sF$ its fixed 
component and by $|M_{\sF}|$ its associated mobile linear system. 
Moreover we denote $D_{\det\sF}$, $M_{\det\sF}$ respectively 
the fixed and the movable part of $|\det\sF|$; that is:
$|\det\sF|=D_{\det\sF}+|M_{\det\sF}|$. 

\medskip

Take $W=\langle \eta_1,\ldots,\eta_{n+1}\rangle$ and $\omega_i$, $i=1,...,n+1$ as above and let $\omega\in H^0(X, \det\sF)$ be 
a $\xi$-adjoint of $W\subset  H^0(X,\sF)$. Let
$$
\mu_{\det\sF}\colon \colon {\rm{Sym}}^2(H^0(X, \det\sF))\to H^0(X, \det\sF^{\otimes 2})
$$ be the natural multiplication homomorphism. The basic definition of this paper is:
\begin{defn}\label{quadricheaggiunte}
An $\omega$-\emph{adjoint quadric} is an element $Q\in {\rm{Sym}}^2(H^0(X, \det\sF))$ such that
\begin{enumerate}
\item    $Q:=\omega\odot \omega-\sum_{i=1}^{n+1} \omega_i\odot L_i$ for some $L_i\in H^0(X,\det\sF)$, $i=1,...,n+1$;
 \item $\mu_{\det\sF}(Q)=0$.
 \end{enumerate}
\end{defn}
The condition $(2)$ of the above Definition means that $Q$ gives an element of ${\rm{Sym}}^{2}H^0(X, \det\sF)$ which vanishes on the schematic image $\phi_{|M_{\det\sF} |}(X)$.
The study of $\omega$-adjoint quadrics is useful to find extension classes supported on a divisor.
\begin{thm}\label{torelloo1} Let $X$ be a compact complex smooth variety. Let $\sF$ be a 
		locally free sheaf of rank 
    $n$ such that $h^{0}(X, \sF)\geq n+1$. Let $\xi\in 
    H^1(X,\sF^\vee)$ and let $Y$ be the schematic 
    image of $\phi_{|M_{\det\sF} |}\colon X\dashrightarrow 
    \mP(H^{0}(X,\det\sF)^{\vee})$. 
    If $\xi$ is such that 
    $\partial_{\xi}^{n}(\omega)=0$, where $\omega$ is an adjoint form associated
    to an $n+1$-dimensional subspace $W\subset\Ker\partial_\xi\subset 
    H^0(X,\sF)$, then $[\omega]=0$, providing that there are no 
    $\omega$-adjoint quadrics.
\end{thm}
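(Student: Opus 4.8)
\medskip
\noindent\textbf{Proof proposal.} The plan is to run the adjoint construction a \emph{second} time: I will show that $\partial_\xi^n(\omega)=0$ forces the existence of an element $Q\in \mathrm{Sym}^2 H^0(X,\det\sF)$ of the shape prescribed in Definition \ref{quadricheaggiunte}, and then the assumption ``no $\omega$-adjoint quadrics'' will say $Q=0$, from which $[\omega]=0$ follows by an elementary piece of linear algebra. (The Adjoint Theorem \ref{teoremaaggiunta} will not enter the argument, although it identifies the conclusion $[\omega]=0$ with the condition that $\xi$ be supported on $D_W$.)

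First I would recast the hypothesis. Taking $\bigwedge^{n}$ of (\ref{unouno}) produces the short exact sequence
\[
0\longrightarrow \bigwedge^{\,n-1}\sF \xrightarrow{\ d\epsilon\wedge-\ } \bigwedge^{\,n}\sE \xrightarrow{\ \bigwedge^{\,n}\rho_1\ } \bigwedge^{\,n}\sF=\det\sF\longrightarrow 0,
\]
whose connecting map $H^0(X,\det\sF)\to H^1(X,\bigwedge^{n-1}\sF)$ is $\partial_\xi^n$. Hence $\partial_\xi^n(\omega)=0$ says exactly that $\omega$ has a \emph{global} lift $\widetilde\omega\in H^0(X,\bigwedge^{n}\sE)$; a local lift always exists, and the whole point of the hypothesis is to make one available over all of $X$. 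Next, I would use the perfect pairing $\sE\otimes\bigwedge^{n}\sE\to\bigwedge^{n+1}\sE=\det\sE\xrightarrow{H^{n}_{d\epsilon}}\det\sF$ to pair $\widetilde\omega$ with $d\epsilon$ --- which returns $\omega$ --- and with the liftings $s_1,\dots,s_{n+1}$, getting global sections
\[
L_i:=H^{n}_{d\epsilon}\!\big(s_i\wedge\widetilde\omega\big)\in H^0(X,\det\sF),\qquad i=1,\dots,n+1,
\]
and then set $Q:=\omega\odot\omega-\sum_{i=1}^{n+1}\omega_i\odot\big((-1)^{i+1}L_i\big)$.

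The crux, and the only genuinely technical point, is to prove $\mu_{\det\sF}(Q)=0$, i.e. $\omega^{2}=\sum_{i=1}^{n+1}(-1)^{i+1}\omega_iL_i$ in $H^0(X,\det\sF^{\otimes2})$. I would check it pointwise. Since (\ref{unouno}) is an exact sequence of bundles, $d\epsilon$ is nowhere vanishing, so near an arbitrary point of $X$ I can choose a local frame $e_0=d\epsilon,e_1,\dots,e_n$ of $\sE$ with $\rho_1(e_1),\dots,\rho_1(e_n)$ a frame of $\sF$; writing $s_i=\sum_{k=0}^{n}a_i^k e_k$, one reads off that $\omega$ is the full $(n{+}1)\times(n{+}1)$ determinant $\det[a_i^k]$ times the induced frame of $\det\sF$, that $\omega_i$ is that frame times the complementary minor obtained by deleting the $i$-th row and the $0$-th column, that the $0$-th coordinate of $\widetilde\omega$ is forced (because $\widetilde\omega$ lifts $\omega$) to be $\det[a_i^k]$ while its other coordinates are immaterial, and that $L_i$ is that frame times the contraction of the $i$-th row of $[a_i^k]$ against the coordinates of $\widetilde\omega$. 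The identity then collapses to the Laplace expansion statement that $\sum_{i}(-1)^{i+1}a_i^{k}\cdot(\text{that }i\text{-th minor})$ equals $\det[a_i^k]$ for $k=0$ and vanishes for $k\geq1$, the latter because the relevant $(n{+}1)\times(n{+}1)$ determinant then has two equal columns; summing these against the coordinates of $\widetilde\omega$ kills every $k\geq1$ contribution and leaves exactly $\omega^{2}$. I should stress that this computation is valid for \emph{any} lift, so it is the \emph{global} existence of $\widetilde\omega$ that makes the $L_i$ global sections and $Q$ a genuine element of $\mathrm{Sym}^2 H^0(X,\det\sF)$.

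Finally I would conclude: $Q$ satisfies conditions (1) and (2) of Definition \ref{quadricheaggiunte}, so were $Q\neq0$ it would be an $\omega$-adjoint quadric, against the hypothesis; therefore $Q=0$. Thus $\omega\odot\omega$ lies in the image of $\lambda^{n}W\otimes H^0(X,\det\sF)\to\mathrm{Sym}^2 H^0(X,\det\sF)$. Since $\omega\odot\omega$ is a perfect square, the elementary fact that a symmetric tensor $v\odot v$ can lie in the image of $U\otimes V$ only when $v\in U$ --- complete a basis of the subspace $U$ to a basis of $V$ containing $v$ among the new vectors, and inspect coordinates in $\mathrm{Sym}^2 V$ --- forces $\omega\in\lambda^{n}W$, that is $[\omega]=0$.
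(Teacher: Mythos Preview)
Your argument is correct and reaches the same key relation $\omega^{2}=\sum_i \omega_i L_i$ in $H^0(X,\det\sF^{\otimes 2})$ that the paper does, but by a genuinely different route. The paper builds the evaluation sequence $0\to\sK\to\bigwedge^{n}W\otimes\sO_X\to\det\sF(-D_W)\otimes\sI_{Z_W}\to 0$, factors the dual of (\ref{unouno}) through it, and pushes the class $\xi$ to a class $\xi''\in H^1(X,\sK)$; the hypothesis $\partial_\xi^{n}(\omega)=0$ then translates into $\xi''\cdot\omega=0$, which forces $\sigma\cdot\omega$ to lie in the image of $\bigwedge^{n}W\otimes\det\sF$, i.e.\ $\omega^{2}=\sum_i\omega_i L_i^{\sigma}$. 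You instead use $\partial_\xi^{n}(\omega)=0$ to produce a global lift $\widetilde\omega\in H^0(X,\bigwedge^{n}\sE)$, define the $L_i$ explicitly as $H^n_{d\epsilon}(s_i\wedge\widetilde\omega)$, and verify the identity pointwise via the Laplace expansion; this is more elementary and self-contained, while the paper's diagram chase ties in more directly with the Koszul/evaluation machinery underlying the Adjoint Theorem. You also make explicit the final linear-algebra step --- that $Q=0$ in $\mathrm{Sym}^2$ forces $\omega\in\lambda^nW$ --- which the paper leaves implicit in its ``By contradiction the claim follows.''
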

\begin{proof}
Let $\sB=\{\eta_{1},\ldots, \eta_{n+1}\}$ be 
a basis of $W$. Set 
$\omega_i$ for $i=1,\dots,n+1$ as above and denote by $\tilde{\omega}_i\in 
H^0(\det\sF(-D_W)\otimes \sI_{Z_W})$ the corresponding sections 
via $0\to H^0(X, \det\sF(-D_W)\otimes \sI_{Z_W})\to H^0(X,\det\sF)$.
Recall that $\lambda^nW:=\left\langle 
\omega_1,\dots,\omega_{n+1}\right\rangle\subset 
H^0(X,\det\mathcal{F})$ is the vector space  generated by the sections $\omega_i$. The standard evaluation
     map $\bigwedge^{n}W\otimes\sO_{X}\to 
     \det\sF(-D_W)\otimes \sI_{Z_{W}}$ given by 
     $\tilde{\omega}_1,\dots,\tilde{\omega}_{n+1}$ gives the following exact sequence
     
\begin{equation}\label{genera1}
\xymatrix { 0\ar[r]&\sK \ar[r]&
\bigwedge^{n}W\otimes\mathcal{O}_X\ar[r] & \det\mathcal{F}(-D_W)\otimes \sI_{Z_W}\ar[r]&0}
\end{equation} which is associated to a class $\xi'\in \text{Ext}^1(\det\sF(-D_W)\otimes \sI_{Z_W},\sK)$.
The sequence (\ref{genera1}) fits into the following commutative diagram 
\begin{equation}
\xymatrix { 0\ar[r]&\sK \ar[r]& \bigwedge^{n}W\otimes\mathcal{O}_X\ar[r] & \det\mathcal{F}(-D_W)\otimes \sI_{Z_W}\ar[r]&0\\
0\ar[r]&\sF^\vee\ar[r]\ar[u]&\sE^\vee\ar[r]\ar[u]^{f}&\mathcal{O}_X\ar[u]^{g}\ar[r]&0,
}
\end{equation} where $f$ is the map given by the contraction by the sections $(-1)^{n+1-i}s_i$, for $i=1,\dots,n+1$, and $g$ is given by the global section $\sigma \in H^0(X,\det\sF(-D_W)\otimes \sI_{Z_W})$ corresponding to the adjoint form $\omega$.
We have the standard factorization
\begin{equation}
\xymatrix { 0\ar[r]&\sK \ar[r]& \bigwedge^{n}W\otimes\mathcal{O}_X\ar[r] & \det\mathcal{F}(-D_W)\otimes \sI_{Z_W}\ar[r]&0\\
0\ar[r]&\sK \ar[r]\ar@{=}[u]&\sL\ar[r]\ar[u]&\sO_X\ar[u]^{g}\ar@{=}[d]\ar[r]&0\\
0\ar[r]&\sF^\vee\ar[r]\ar[u]&\sE^\vee\ar[r]\ar[u]&\mathcal{O}_X\ar[r]&0
}
\end{equation} where the sequence in the middle is associated to the class
$\xi''\in H^1(X,\sK)$ which is the image of $\xi\in H^1(X,\sF^\vee)$ through the map $H^1(X,\sF^\vee)\to H^1(X,\sK)$.
In particular we obtain the commutative square:
\begin{equation}
\xymatrix {H^0(X,\det \sF(-D_W)\otimes \sI_{Z_W})\ar[r]&H^1(X,\sK)\ar@{=}[d]\\
H^0(X,\sO_X)\ar[u]\ar[r]& H^1(X,\sK).
}
\end{equation} By commutativity, we immediately have that the image of $\sigma \in H^0(X,\det\sF(-D_W)\otimes \sI_{Z_W})$ through
the coboundary map $H^0(X,\det \sF(-D_W)\otimes \sI_{Z_W})\to H^1(X,\sK)$ is $\xi''$.
Tensoring by $\det \sF$, the map $\sF^\vee\to\sK$ gives 
\begin{equation}
\xymatrix { \sF^\vee\otimes\det\sF \ar@{=}[d]\ar[r] &\sK\otimes\det\sF\\
\bigwedge^{n-1}\sF\ar[ur]^{\Gamma}
}
\end{equation} and, since $\xi\cdot\omega\in 
H^1(X,\sF^\vee\otimes\det\sF)$ is sent to $\xi''\cdot\omega\in H^1(X,\sK\otimes\det\sF)$, we have that
\begin{equation}
H^{1}(\Gamma)(\xi\cup \omega)=\xi''\cdot\omega,
\end{equation} where $\xi\cup \omega$ is the cup product.

By hypothesis $\partial_\xi^n(\omega)=\xi\cup \omega=0\in H^1(X,\bigwedge^{n-1}\sF)$, so also 
$\xi''\cdot \omega=0\in H^1(X,\sK\otimes\det\sF)$, hence the global section $\sigma\cdot\omega\in H^0(X,\det \sF(-D_W)\otimes \sI_{Z_W}\otimes\det\sF)$ is in the kernel of the coboundary map $H^{0}(X,\det \sF(-D_W)\otimes \sI_{Z_W}\otimes\det\sF)\to H^{1}(X, 
\sK\otimes\det\sF)$ associated to the sequence

\begin{equation}
\xymatrix { 0\ar[r]&\sK\otimes\det\sF \ar[r]&
\bigwedge^{n}W\otimes\det\sF\ar[r] & \det\mathcal{F}(-D_W)\otimes \sI_{Z_W}\otimes\det\sF\ar[r]&0.}
\end{equation}
This occurs iff there exist 
$L^{\sigma}_{i}\in 
H^{0}(X,\det\sF)$, $i=1,\ldots, n+1$ such that
\begin{equation}
\sigma\cdot\omega=\sum_{i=1}^{n+1}\tilde{\omega}_{i}\cdot L^{\sigma}_{i}.
\end{equation}
This relation gives the following relation in $H^{0}(X,\det
\sF^{\otimes2})$:
\begin{equation}\label{quadra}
\omega\cdot\omega=\sum_{i=1}^{n+1}L^{\sigma}_{i}\cdot\omega_{i}.
\end{equation}
Then the equation (\ref{quadra}) gives an adjoint 
quadric. By contradiction the claim follows.
\end{proof}

\begin{cor}
\label{torelloooo} In the hypothesis of  Theorem \ref{torelloo1} it holds that $\xi$ is supported on $D_{W}$; that is, $\xi_{D_{W}}$ is trivial. Moreover if we further assume that $W$ is generic inside $H^{0}(X, \sF)$ it follows that 
$\xi$ is supported on $D_{\sF}$.
\end{cor}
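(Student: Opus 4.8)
The plan is to deduce Corollary~\ref{torelloooo} directly from Theorem~\ref{torelloo1} together with the Adjoint Theorem (Theorem~\ref{teoremaaggiunta}). Theorem~\ref{torelloo1} gives, under the stated hypotheses and the assumption that there are no $\omega$-adjoint quadrics, the vanishing $[\omega]=0$ of the Massey product. But the Adjoint Theorem says precisely that $[\omega]=0$ implies that $\xi$ is supported on $D_W$, i.e.\ $\xi_{D_W}=0$. So the first assertion is just the concatenation ``no adjoint quadrics $\Rightarrow$ $[\omega]=0$ $\Rightarrow$ $\xi_{D_W}=0$''.

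For the second assertion I would argue that when $W$ is chosen generically inside $H^0(X,\sF)$, the fixed divisor $D_W$ of $|\lambda^n W|$ agrees with the fixed component $D_\sF$ of the full linear system $\mP(\lambda^n H^0(X,\sF))\subset|\det\sF|$. Indeed $\lambda^n W\subset \lambda^n H^0(X,\sF)$ for every $W$, so $D_\sF\le D_W$ always; and for generic $W$ the span $\lambda^n W$ is a generic $(n+1)$-dimensional (or lower, if there are syzygies) subsystem of $\mP(\lambda^n H^0(X,\sF))$, hence it has the same fixed component, so $D_W = D_\sF$. Once $D_W=D_\sF$, the vanishing $\xi_{D_W}=0$ that we already proved reads $\xi_{D_\sF}=0$, which is exactly the statement that $\xi$ is supported on $D_\sF$.

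The one point that needs a little care — and which I expect to be the main (if modest) obstacle — is justifying that a \emph{generic} $(n+1)$-dimensional subspace $W$ of $H^0(X,\sF)$ produces a subsystem $|\lambda^n W|$ whose fixed locus is no bigger than that of the ambient system $\mP(\lambda^n H^0(X,\sF))$. This is the standard fact that a generic linear subsystem of a linear system has the same fixed component, but here one must check it survives the passage through the (possibly non-injective) map $\lambda^n$: one wants that for generic $\eta_1,\dots,\eta_{n+1}$ the sections $\omega_i=\lambda^n(\eta_1\wedge\cdots\wedge\widehat{\eta_i}\wedge\cdots\wedge\eta_{n+1})$ already cut out $D_\sF$ and nothing more along a general point of $X\setminus\Bs|M_\sF|$. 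Since $\det\sF(-D_\sF)$ has no fixed component and the $\omega_i$ as $W$ varies span the whole image $\lambda^n H^0(X,\sF)$, a general choice of $W$ will have $\bigcap_i \{\omega_i=0\}$ of the expected codimension, so no extra divisorial component appears; hence $D_W=D_\sF$ for generic $W$. Everything else is formal.
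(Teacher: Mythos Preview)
Your proof is correct and matches the paper's approach: the first assertion is exactly the concatenation of Theorem~\ref{torelloo1} with the Adjoint Theorem~\ref{teoremaaggiunta}, and the second reduces to the equality $D_W=D_\sF$ for generic $W$. The only difference is that the paper invokes \cite[Proposition~3.1.6]{PZ} for this last equality, whereas you sketch the argument directly; your sketch is sound in outline, and the passage you flag as needing care is precisely what that cited proposition makes rigorous.
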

\begin{proof}
The first claim follows by Theorem 
\ref{torelloo1} and by Theorem \ref{teoremaaggiunta}. To show the second claim, we recall that by \cite[Proposition 3.1.6]{PZ} $D_{\sF}=D_W$ since $W$
is a generic $n+1$-dimensional subspace of $H^0(X,\sF)$. Then the claim follows.
\end{proof}

\section{Nori Families} 
We apply the notion of adjoint quadrics to the case where $\sF$ is the cotangent sheaf $\Omega^{1}_{X}$ of a smooth variety. We stress that we want to find conditions on a family $\pi\colon\sX\rightarrow B$ which ensure that the fibers are in the same birational class.
\subsection{A notion of equivalence among families of Albanese type}
The notion of Albanese type family behaves well under base change and we can introduce a notion of equivalence for this kind of families. Consider a family of Albanese type $\Phi\colon \sX{\rightarrow}\sA$ as in the Introduction.

\subsubsection{Translation equivalence} If $s\colon B\to\sA$ is a section of
$p:\sA\rightarrow B$, we define the translated family
$\Phi_{s}:\sX\rightarrow
\sA$ of $\Phi$ by the formula:
$$\Phi_{s}(x)=\Phi(x)+ s(\pi(x)).$$
Notice that $\Phi_{s}:\sX\rightarrow
\sA$ is a family of Albanese type.
Two
families
$\Phi$ and $\Psi$ over $B$
are said to be {\it{translation equivalent}}
if there exists a section $\sigma$ of $p\colon \sA\rightarrow B$ such that
the {\it{images}} of
$\Phi_{\sigma}$
and $\Psi$ (fiberwise) coincide.

 We recall also the following definition given in 
\cite[definition 1.1.2]{PZ}:

\begin{defn}\label{panna}  Two families of Albanese type
$\Phi\colon \sX{\rightarrow}\sA$,
$\Phi'\colon \sX^{'} {\rightarrow}\sA^{'}$ over, respectively, $B$ and $B'$ will be said
{\it{locally translation equivalent}}, if there exist an open set $U\subset B$ an open set $U'\subset B'$ and a biregular map $\mu \colon U'\to U:=\mu(U')\subset B$ such that the pull-back families 
$\mu^{\ast}( \Phi_{U})$ and $\Phi^{'}_{U^{'}}$ are {\rm{translation equivalent}} where $U$, $U'$ are dense with respect to the classical topology on $B$ respectively $B'$. We will say that $\Phi$ is {\it{trivial}} if $\sX=X\times B$,
$\sA=A\times B$ and $\pi_{A}(\Phi({X_{b}}))=\pi_{A}(\Phi(X_{b_{0}}))$ for all $b$ where $\pi_{A}\colon A\times B\to A$ is the natural projection.
\end{defn}

\medskip

We will use the following:
\begin{prop}\label{pomodoro}
An Albanese type family $\Phi\colon\sX\to\sA$ is locally translation equivalent to a trivial family if
and only if the fibers $X_{b}$ are birationally equivalent.
\end{prop}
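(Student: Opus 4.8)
The plan is to prove both implications directly from the definitions, working over a suitable dense open subset of $B$. For the "only if" direction, suppose $\Phi\colon\sX\to\sA$ is locally translation equivalent to a trivial family $\Psi\colon X\times B'\to A\times B'$. By Definition \ref{panna} there are dense opens $U\subset B$, $U'\subset B'$, a biregular $\mu\colon U'\to U$, and a section $\sigma$ of $p$ over $U$ such that the fiberwise images of $\mu^\ast(\Phi_U)_\sigma$ and $\Psi_{U'}$ coincide. For $b'\in U'$ with $b=\mu(b')$, the translation $\Phi_{\sigma,b}$ differs from $\Phi_b$ by an automorphism of $A_b$ (translation by $\sigma(b)$), hence is still birational onto its image; and its image equals $\pi_A(\Psi(X\times\{b'\}))=\pi_A(\Psi(X_{b_0'}))$, which is independent of $b'$. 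Since $X_b\to Z_b$ is birational by axiom (2) of a family of Albanese type, we get $X_b$ birational to the fixed variety $X$ for all $b$ in the dense open $U$; as $B$ is an irreducible curve and birational type is constant on a dense open of a family, all fibers $X_b$, $b\in B$, are birationally equivalent.

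For the "if" direction, assume all fibers $X_b$ are birationally equivalent, to a fixed smooth model $X$ with Albanese variety $A:=\Alb(X)$. The key point is that, after shrinking $B$ to a dense open $U$, the Albanese family $p\colon\sA_U\to U$ becomes trivial: each $A_b$ is canonically $\Alb(X_b)$, and a birational map $X_b\dashrightarrow X$ induces an isomorphism $\Alb(X_b)\xrightarrow{\sim}\Alb(X)=A$ (Albanese is a birational invariant for smooth projective varieties, and the isomorphism is unique up to translation). One must check these isomorphisms can be chosen to vary algebraically over a dense open $U$; this is where the argument does real work, and I would use that the relative Albanese $\sA_U\to U$ together with the cycle $[\sX]^+$ trivializes after a finite base change and then descends, or invoke directly the relevant statement from \cite{PZ}. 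Granting this, $\sA_U\cong A\times U$, and the section $\sigma$ is chosen so that, composing $\Phi_U$ with the trivialization and translating by $\sigma$, the image $\pi_A(\Phi_{\sigma}(X_b))$ becomes the fixed cycle $Z:=$ image of $X$ in $A$, independent of $b$. Finally, since $X_b\to\pi_A(\Phi_\sigma(X_b))=Z$ is birational for every $b$, the family $\sX_U\to U$ is birational over $U$ to $Z\times U$, hence to $X\times U$, realizing the translated family as the trivial family $X\times U\to A\times U$ in the sense of Definition \ref{panna}. Thus $\Phi$ is locally translation equivalent to a trivial family.

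The main obstacle is the algebraicity/constancy statement in the "if" direction: passing from the pointwise fact "$A_b\cong A$ for each $b$" and "$X_b$ birational to $X$ for each $b$" to a genuine trivialization of the relative Albanese variety and of the relative image cycle over a dense open $U\subset B$. Pointwise isomorphisms need not a priori glue, and birational maps $X_b\dashrightarrow X$ need not fit into a family; one circumvents this by spreading out a single birational equivalence over a dense open (possibly after a finite étale base change, which is harmless by Definition \ref{panna} since it only asks for equivalence after pulling back to some $U'$), using that the relative Albanese is rigid once the fibers are all isomorphic to $A$ and $B$ is a curve. Everything else — that translations are automorphisms preserving birationality of $\phi_b$, that birational type is locally constant in families, that birational invariance of $\Alb$ gives the required isomorphisms fiberwise — is standard and I would treat it as routine.
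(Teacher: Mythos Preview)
The paper does not actually prove this proposition: its entire proof is the sentence ``See \cite[Proposition 1.1.3]{PZ}.'' So there is no argument in the present paper to compare your sketch against; you are effectively trying to reconstruct the cited result.

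Your ``only if'' direction is essentially correct. For the ``if'' direction you correctly isolate the real content --- upgrading pointwise birational equivalences $X_b\dashrightarrow X$ and pointwise isomorphisms $A_b\cong A$ to an honest trivialization of the relative Albanese and of the image cycle over a dense open --- but you do not carry it out: you say you would ``invoke directly the relevant statement from \cite{PZ}'', which is precisely what the paper does. As written, then, your ``if'' direction is not an independent proof.

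One genuine misreading: you assert that a finite \'etale base change is ``harmless by Definition \ref{panna} since it only asks for equivalence after pulling back to some $U'$''. That is not what the definition says: $\mu\colon U'\to U$ is required to be \emph{biregular}, so a nontrivial finite cover is not permitted. What actually rescues the argument is a point you do not use: the opens $U,U'$ in Definition \ref{panna} are open for the \emph{classical} (analytic) topology, and are only required to be dense. Hence you may take $U$ to be a small simply-connected disc in $B$. Over such a disc an isotrivial family of polarized abelian varieties has trivial monodromy and genuinely trivializes, giving $\sA_U\cong A\times U$ without any base change. Once this is in place, one can spread out a single birational equivalence $X_{b_0}\dashrightarrow X$ to a relative birational map over a possibly smaller disc (e.g.\ via the relative Hilbert scheme of graphs), translate by a section to pin the image, and conclude. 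If you want your argument to stand on its own rather than defer to \cite{PZ}, this analytic-open mechanism is the step you should make explicit.
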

\begin{proof} See \cite[Proposition 1.1.3]{PZ}.\end{proof}

\subsection{Liftability assumptions}
The following conditions are natural in order to find  families locally translation equivalent to trivial families.

\begin{defn}\label{torellifamily} We say that a family $\pi\colon \sX {\rightarrow}B$ of relative dimension $n$ 
    satisfies extremal liftability conditions over a $1$-dimensional variety $B$ if
\begin{enumerate}
\item $ H^0({\sX},\Omega_{{\sX}}^1)
\twoheadrightarrow H^0(X_{b},\Omega_{X_{b}}^1)$;
\item $H^0({\sX},\Omega_{{\sX}}^n)
\twoheadrightarrow H^0(X_{b},\Omega_{X_{b}}^n)$
\end{enumerate}
where the symbol $\twoheadrightarrow$ means that the homomorphism is surjective. 
\end{defn}

\medskip

The above definition says that all the $1$-forms and all the $n$-forms of the fiber $X_b$ are obtained by restriction of forms defined on the family $\sX$. Comparing the two conditions with the hypotheses of Theorem \ref{torelloo1} we see that they ensure that $\partial^{1}_{\xi_b}=0$ and $\partial_{\xi_{b}}^n=0$, where $\xi_b\in H^1(X_b, \Theta_{X_{b}})$ is an infinitesimal deformation in the image of the Kodaira-Spencer map associated to $\pi\colon \sX {\rightarrow}B$.

\begin{prop}\label{insalata}
Let $\Phi\colon\sX\to\sA$ be an Albanese type family such that for 
every $b\in B$ it holds that $ H^0({\sX},\Omega_{{\sX}}^1)
\twoheadrightarrow H^0(X_{b},\Omega_{X_{b}}^1)$. Then up to shrinking 
$B$ the fibers of 
$p\colon\sA\to B$ are isomorphic.
\end{prop}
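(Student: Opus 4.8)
\medskip
\noindent\textbf{Proof strategy for Proposition \ref{insalata}.}
The plan is to turn condition (1) into the statement that the weight–one variation of Hodge structure attached to $p\colon\sA\to B$ has vanishing infinitesimal period map, and then to read off isotriviality. Both the hypothesis and the conclusion are local on $B$ (any open subset inherits the surjectivity of $H^0(\sX,\Omega^1_\sX)\to H^0(X_b,\Omega^1_{X_b})$, since these forms restrict to forms on the open part), so I would first replace $B$ by a small analytic disc $\Delta$ with coordinate $t$, over which $R^1\pi_*\mC$ and $R^1p_*\mC$ are trivial local systems.

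The first step is the well-known infinitesimal translation of (1), already recorded before the statement: $\partial^1_{\xi_b}\colon H^0(X_b,\Omega^1_{X_b})\to H^1(X_b,\sO_{X_b})$ vanishes for every $b$. In Hodge-theoretic language, writing $\sH:=R^1\pi_*\mC\otimes\sO_\Delta$ with Gauss--Manin connection $\nabla$ and $\sF^1:=\pi_*\Omega^1_{\sX/\Delta}\subset\sH$ for the Hodge bundle (with fibre $H^0(X_b,\Omega^1_{X_b})$ at $b$), this says that the $\sO_\Delta$-linear map $\bar\nabla\colon\sF^1\to(\sH/\sF^1)\otimes\Omega^1_\Delta$ is identically zero, i.e. $\sF^1$ is a $\nabla$-flat subbundle. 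I would prove this directly from liftability: for $\tilde\alpha\in H^0(\sX,\Omega^1_\sX)$ the form $d\tilde\alpha$ restricts to $d(\tilde\alpha|_{X_b})=0$ on every fibre (holomorphic $1$-forms on the projective $X_b$ are closed), hence $d\tilde\alpha=\pi^*(dt)\wedge\gamma$ for a uniquely determined relative form $\gamma\in H^0(\Delta,\sF^1)$; the Cartan--Katz--Oda formula then gives $\nabla\big(t\mapsto[\tilde\alpha|_{X_t}]\big)=[\gamma|_{X_t}]\otimes dt\in\sF^1\otimes\Omega^1_\Delta$. Since by hypothesis the classes $\tilde\alpha|_{X_b}$ fill up $\sF^1_b$, and $\bar\nabla$ is $\sO$-linear, we get $\bar\nabla\equiv0$.

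Next I would transfer this to $\sA$ through $\Phi$. The map $\Phi$ over $\Delta$ induces a morphism of variations of Hodge structure $\Phi^*\colon R^1p_*\mC\to R^1\pi_*\mC$, on fibres the pullback $\phi_b^*\colon H^1(A_b,\mC)\to H^1(X_b,\mC)$. This is injective: being a morphism of Hodge structures it suffices to check injectivity on $H^{1,0}$, i.e. that $\phi_b^*\colon H^0(A_b,\Omega^1_{A_b})\to H^0(X_b,\Omega^1_{X_b})$ is injective — and this holds because $Z_b$ generates $A_b$ as a group, so the addition map $X_b^{\times k}\times X_b^{\times l}\to A_b$ built from $\phi_b$ and the group law is dominant and pulls a translation-invariant $1$-form $\omega$ back to a sum of copies of $\pm\phi_b^*\omega$; if $\phi_b^*\omega=0$ then $\omega=0$. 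Hence $R^1p_*\mC$ is a sub-VHS of $R^1\pi_*\mC$, and strictness of $\Phi^*$ for the Hodge filtrations identifies the Hodge bundle $p_*\Omega^1_{\sA/\Delta}$ with $\sF^1\cap(R^1p_*\mC\otimes\sO_\Delta)$ (the fibrewise check uses that $\phi_b^*$ is a morphism of Hodge structures). Over the disc, $\sF^1$ flat means $\sF^1=F\otimes\sO_\Delta$ for a constant subspace $F\subset H^1(X_0,\mC)$, the constant local system $R^1p_*\mC$ maps in as a constant $L\subset H^1(X_0,\mC)$ defined over $\mZ$, and the Hodge filtration of $\sA$ is the constant $F\cap L$. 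Thus the polarised weight-one Hodge structure on $H^1(A_b,\mZ)$ is independent of $b$, and so is the abelian variety $A_b$ it determines; this is the assertion (one has shrunk $B$ to $\Delta$, as in the statement).

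The conceptual core — liftability of $1$-forms $\Rightarrow$ $\bar\nabla=0$ $\Rightarrow$ isoconstancy of the Albanese-type family — is short. I expect the only delicate points to be: (i) the step $\bar\nabla_\sX=0$ must be justified on the \emph{non-proper} total space $\sX$, where global $1$-forms need not be closed; the resolution is exactly that $d\tilde\alpha$ is ``purely horizontal'' ($d\tilde\alpha=\pi^*(dt)\wedge\gamma$), which is all the Katz--Oda formula needs; and (ii) one must make sure that $\Phi^*$ really embeds $R^1p_*\mC$ as a sub-VHS and matches the Hodge bundles, so that flatness descends from $\sX$ to $\sA$ — this rests on $\phi_b^*$ being injective on $H^1$, i.e. on condition (3) that $Z_b$ generates $A_b$.
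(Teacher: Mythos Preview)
Your argument is correct and follows the same conceptual arc as the paper: liftability of $1$-forms forces the infinitesimal period map of the abelian family $p\colon\sA\to B$ to vanish, whence isotriviality. The executions differ, however. The paper works directly with extension classes: it pulls back the sequence $0\to\sO_{A_b}\to\Omega^1_{\sA|A_b}\to\Omega^1_{A_b}\to 0$ along $\phi_b$, maps it into the analogous sequence for $\sX|_{X_b}$, and reads off from the resulting commutative square in cohomology that the connecting map $\partial_{\mu_b}\colon H^0(A_b,\Omega^1_{A_b})\to H^1(A_b,\sO_{A_b})$ vanishes; the conclusion is then obtained by citing \cite[p.~78]{CP}. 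Your route via the Gauss--Manin connection and the embedding of VHS $\Phi^*\colon R^1p_*\mC\hookrightarrow R^1\pi_*\mC$ is equivalent---the Kodaira--Spencer coboundary $\partial_{\mu_b}$ is precisely the fibre of $\bar\nabla$ for the abelian family---but it is more self-contained: you make explicit the injectivity of $\phi_b^*$ on $H^1$ (which the paper's passage from $\delta=0$ on $X_b$ to $\partial_{\mu_b}=0$ on $A_b$ uses tacitly, via the right-hand vertical arrow $\phi_b^*\colon H^1(A_b,\sO_{A_b})\to H^1(X_b,\sO_{X_b})$), and you derive the final isotriviality step from Hodge theory rather than by external reference. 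The paper's version is shorter; yours unpacks more of the machinery and isolates exactly where condition~(3) in the definition of Albanese-type family is used.
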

\begin{proof} Let $\mu_b\in {\rm{Ext}}^1(\Omega^{1}_{A_b},\sO_{A_b})$
    be the class given by the family $p\colon\sA\to B$,
 that is the class of the following extension:
$$
0\to \sO_{A_{b}}\to\Omega^1_{\sA|A_{b}}\to\Omega^1_{A_{b}}\to 0.
$$\noindent
Now $\phi_b^*\sO_{A_{b}}=\sO_{X_b}$ and the map $\phi_b^*\sO_{A_{b}}\to\phi_b^*\Omega^1_{\sA|A_{b}}$ is generically injective, hence it is injective because otherwise the kernel would be a torsion subsheaf of $\sO_{X_b}$. Thus we have the following exact sequence
$$
0\to \phi_b^*\sO_{A_{b}}\to\phi_b^*\Omega^1_{\sA|A_{b}}\to\phi_b^*\Omega^1_{A_{b}}\to 0
$$ which fits into the following diagram
\begin{equation*}
\xymatrix { 0\ar[r]&\phi_b^*\sO_{A_{b}} \ar[r]\ar@{=}[d]&
\phi_b^*\Omega^1_{\sA|A_{b}}\ar[r]\ar[d] & \phi_b^*\Omega^1_{A_{b}}\ar[d]\ar[r]&0\\
0\ar[r]&\sO_{X_b} \ar[r]&
\Omega^1_{\sX|X_{b}}\ar[r] & \Omega^1_{X_{b}}\ar[r]&0.
}
\end{equation*}
In cohomology we have
\begin{equation*}
\xymatrix {H^0(X_b,\phi_b^*\Omega^1_{A_{b}})\ar[d]\ar[r]&H^1(X_b, \sO_{X_b})\ar@{=}[d]\\
H^0(X_b,\Omega^1_{X_{b}})\ar[r]&H^1(X_b, \sO_{X_b})
}
\end{equation*} so, by commutativity and by the hypothesis $ H^0({\sX},\Omega_{{\sX}}^1)
\twoheadrightarrow H^0(X_{b},\Omega_{X_{b}}^1)$, we immediately obtain $ H^0(X_b,\phi_b^*\Omega^1_{\sA|A_{b}})
\twoheadrightarrow H^0(X_{b},\phi_b^*\Omega^1_{A_{b}})$ and hence the coboundary $\partial_{\mu_{b}}\colon H^0(A_b ,\Omega^1_{A_{b}})\to H^1(A_b, \sO_{A_{b}})$ is trivial. Then by cf. \cite [Page  78]{CP} we conclude.
\end{proof}

\subsection{Nori Families}
Let $\Phi:\sX\rightarrow\sA$ be an Albanese type family over the unitary disc
$\Delta$. From $\Phi(\sX)\hookrightarrow\sA$ we obtain the basic cycle
$[\sZ]=[\sX]^{+}-[\sX]^{-}$ as in (\ref{cnstccl}). First we see that $[\sZ]\in
Z^{a-n}_{h}(\sA/B)$. To the normal function defined by $\sZ$ it is associated its infinitesimal invariant $\delta_{\sZ}$; see: cf.\cite{Vo}.

\begin{defn}
An Albanese type family $\Phi$ is called Nori trivial if the
infinitesimal invariant
$\delta_{\sZ}$ induced by the cycle $[\sZ]$ is zero for the generic
$b\in B$ (hence for all $b$).
\end{defn}
 
 \noindent {\bf{Transversality.}} Fix  $s_{1},\dots ,s_{n+1}\in H^{0}(\sA,\Omega^{1}_{\sA_{}}\,)$ such
that
$s_{1}\wedge\dots \wedge s_{n+1}$ induces, by fiber restriction, a non trivial form $\Omega\in
H^{0}(A_{b},\Omega^{1+n}_{\sA\mid A_{b}}\,).$ Let
$\xi_b\in H^{1}(X_{b},T_{X_{b}})$
be an infinitesimal deformation given by the Kodaira-Spencer map. Let
$$r:\Phi_{b}^{\ast}\Omega^{1}_{\sA}\rightarrow \Omega^{1}_{X_b}$$ be the
restriction map and set
$\eta_{i}=r(\Phi_{b}^{\ast}(s_{i}))$, $i=1,...,n+1.$ In our case the set $\sB=\{\eta_{i}\}_{i=1}^{n+1}$ is a basis of a vector space $W\subset
H^{0}(X_{b},\Omega^{1}_{X}\,)$. Suppose that
$W\subset{\rm{Ker}}\partial_{\xi_b}.$ Then the element
$\Phi_{b}^{\ast}(s_{1}\wedge\dots \wedge s_{n+1})$ gives precisely an adjoint form $\omega_{_{\xi_b,W,\sB}}$ once it is restricted to $X_b$. Now consider the factorisation $\phi_b\colon X_b\to A_b$ induced by the morphism $\Phi\circ j_b\colon X_b\to\sA$ where $j_b\colon X_b\to\sX$ is the natural inclusion. In \cite[Theorem 5.2.5]{PZ} it is proved:
\begin{thm}\label{trasversalita}{\bf{Transversality Criteria.}}
If $\delta_{\sZ}(b)=0$ then for every
$\sigma\in H^{0}(A_{b},\Omega^{n}_{A_{b}}\,)$ it holds:
$$\int_{X}\omega_{_{\xi_b,W,\sB}}
\wedge{\overline{\phi^{\star}_b\sigma}}=0.$$
\end{thm}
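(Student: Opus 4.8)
The plan is to reduce the statement to the infinitesimal invariant of the basic cycle $[\sZ]$ via the theory of normal functions, following the strategy of \cite{PZ} and \cite{Vo}. First I would fix a point $b\in B$, take a local coordinate $t$ on $B$ centered at $b$, and work on the unitary disc $\Delta$. The key object is the normal function $\nu_{\sZ}$ attached to the relatively homologically trivial cycle $[\sZ]\in Z^{a-n}_{h}(\sA/B)$, whose infinitesimal invariant $\delta_{\sZ}(b)$ lives in the appropriate cohomology of the Koszul-type complex built from the Gauss--Manin connection on $R^{2(a-n)-1}p_{*}\mC$ (here $2(a-n)-1=2a-2n-1$, and since $\dim A_b=a$, $\dim Z_b=n$, the relevant Hodge pieces are $H^{a-n-1,a-n}$ and its neighbours). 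The pairing that appears in the statement, $\int_{X}\omega_{\xi_b,W,\sB}\wedge\overline{\phi_b^{\star}\sigma}$, should be recognized as the value, under the cup/integration pairing, of $\delta_{\sZ}(b)$ against the class determined by $\Omega=s_1\wedge\cdots\wedge s_{n+1}$ and $\sigma\in H^{0}(A_b,\Omega^{n}_{A_b})$. So the whole argument is: produce this identification, then invoke $\delta_{\sZ}(b)=0$.

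The key steps, in order, are as follows. (1) Recall Griffiths' description: the infinitesimal invariant $\delta_{\sZ}(b)$ is computed by differentiating (via Gauss--Manin) a local lift of the normal function and projecting; concretely, for $\xi_b$ the Kodaira--Spencer class, $\delta_{\sZ}$ pairs with Hodge classes through $\nabla_{\partial/\partial t}$. (2) Express the adjoint form $\omega_{\xi_b,W,\sB}$ intrinsically: by the Gauss-type construction of Subsection \ref{sezione1}, $\omega_{\xi_b,W,\sB}=\Lambda(s_1\wedge\cdots\wedge s_{n+1})$ is obtained by choosing liftings $\tilde s_i\in H^{0}(X_b,\sE)$ of $\eta_i=r(\Phi_b^{\star}s_i)$ to the conormal-type sequence $0\to\sO_{X_b}\to\sE\to\Omega^1_{X_b}\to0$ whose class is $\xi_b$; since the $\tilde s_i$ can be taken to be the restrictions of the global forms $s_i$ on $\sA$ pulled back and contracted with $\partial/\partial t$, the wedge $\tilde s_1\wedge\cdots\wedge\tilde s_{n+1}$ is exactly the interior product of $\Phi_b^{\star}(s_1\wedge\cdots\wedge s_{n+1})$ along the connecting vector field, i.e. the "derivative" of $\Omega$. (3) Compute the pairing $\int_{X_b}\omega_{\xi_b,W,\sB}\wedge\overline{\phi_b^{\star}\sigma}$ and match it, term by term, with the expression for $\langle\delta_{\sZ}(b),[\Omega]\otimes[\sigma]\rangle$ coming from the $C^{\infty}$ description of normal functions (Poincaré-dual currents of $[\sX]^{+}$ and $[\sX]^{-}$, their difference bounding, and the Hodge-theoretic contraction). (4) Since $\delta_{\sZ}(b)=0$ by hypothesis of Nori triviality, conclude the integral vanishes for every $\sigma$.

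The main obstacle I expect is Step (3): making the identification between the algebraically-defined adjoint form (a section of $\det\sF=\omega_{X_b}$ built by the Koszul/determinant machinery) and the analytically-defined infinitesimal invariant of the normal function precise, including getting the signs and the role of the fixed divisor/base locus right, and checking that the liftings used to form $\omega_{\xi_b,W,\sB}$ are compatible with the $C^{\infty}$ splitting of the Hodge filtration used to define $\delta_{\sZ}$. This is essentially the heart of \cite[Theorem 5.2.5]{PZ}, and the proof would cite that computation rather than redo it; what remains genuinely to verify here is that the transversality hypothesis $W\subset\Ker\partial_{\xi_b}$ guarantees $\Phi_b^{\star}(s_1\wedge\cdots\wedge s_{n+1})$ restricts to a genuine adjoint form (not merely zero), so that the pairing statement is non-vacuous, and that decomposition (\ref{barra}) lets one replace an arbitrary $\sigma\in H^{0}(A_b,\Omega^n_{A_b})$ by its pullback without losing information.
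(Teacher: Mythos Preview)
Your proposal is correct in spirit and in fact more detailed than the paper's own treatment: the paper does not give a proof of Theorem~\ref{trasversalita} at all, but simply states it as a quotation of \cite[Theorem 5.2.5]{PZ}. Since you also explicitly defer the core identification (your Step (3)) to that same reference, your approach and the paper's coincide; the additional steps you sketch (the Gauss--Manin/normal function setup and the interpretation of the adjoint form as the restriction of $\Phi_b^{\star}(s_1\wedge\cdots\wedge s_{n+1})$) are exactly the ingredients of the argument in \cite{PZ}.
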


\subsection{Proof of the Main theorem}

\begin{proof}  
 By Proposition \ref{insalata} we can assume that $p\colon\sA\to B$ is trivial, that is $\sA\simeq A\times B$ and $p\colon\sA\to B$ is the first projection. Up to base change, the Albanese family ${\rm{alb}}(\sX)\colon \sX\to  {\rm{Alb}}(\sX)$ exists and by Proposition \ref{pomodoro}, our claim is equivalent to show that  the Albanese family ${\rm{alb}} (\sX) \colon \sX\to  {\rm{Alb}} (\sX)$
 is locally translation equivalent to the trivial family. Hence it is not restrictive to assume that ${\rm{Alb}}(\sX)=A\times B$ too. In particular we can restrict to consider only  the case where  ${\rm{Alb}}(X_b)=A$ for every $b\in B$.

 Denote by $\xi_b\in H^1(X_b,\Theta_{X_b})$ a class associated to an infinitesimal deformation of $X_b$ induced
by the fibration $\pi\colon\sX\rightarrow B$. We know that $q\geq n+1$ where $q={\rm{dim}}_\mathbb C A$. 
Let $\sB:=\{dz_1,\ldots,dz_{n+1}\}$ be a 
basis of an $n+1$-dimensional generic subspace $W$ of $H^0( A,\Omega^{1}_{A})$, (if $q=n+1$ we can take $H^0( A,\Omega^{1}_{A})=W$). 
For every $b\in B$ let $\eta_i(b):= {\rm{alb}}(X_b)^{\star}dz_i$, $i=1,\ldots, n+1$. By standard
theory of the Albanese morphism it holds that $\sB_b:=\{ 
\eta_1(b),\ldots, \eta_{n+1}(b)\}$ is a basis of the pull-back $W_{b}$ 
of $W$ inside $H^0(X_b,\Omega_{X_{b}}^1)$. 
Let $$\omega_i (b):= 
\lambda^{n}(\eta_1(b)\wedge\ldots\wedge\eta_{i-1}(b)\wedge{\widehat{\eta_i(b)}}\wedge\ldots\wedge\eta_{n+1}(b))$$ for $i=1,\ldots, n+1$. Note that if 
$\omega'_i:=dz_1\wedge \ldots \wedge dz_{i-1} \wedge {\widehat{dz_i}} \wedge\ldots\wedge dz_{n+1}$ then $\omega_i (b):=\phi_{b}^{\star}\omega_i'$, $i=1,...,n$.
Since $\Phi\colon\sX\to\sA$ is a family of Albanese type, 
${\rm{dim}}\lambda^nW_{b}\geq 1$, (actually if $q>n+1$ by \cite[Theorem 1.3.3]{PZ} it follows
that $\lambda^nW_{b}$ has dimension $n+1$),
and we can write:
$\lambda^nW_{b}=\langle \omega_1(b),\ldots,\omega_{n+1}(b)\rangle$. 

By extremal liftability assumptions we can form the Massey class for every $[W]\in\mathbb G(n+1,q)$ where we denote by $\mathbb G(n+1,q)$ the Grassmannian of $n+1$ dimensional subspaces of $H^0(X_b,\Omega^1_{X_b})$. 

Consider the following diagram in Dolbeaut's cohomology:

\begin{equation}\label{moltiplica}
\xymatrix{
H^0(X_b, \phi_{b}^{\star}\Omega^n_{A}\otimes \omega_{X_b})\ar^{\mu\circ j}[r] \ar_{j}[d]& H^0(X_b,\omega_{X_b} \otimes \omega_{X_b} )\\
H^{n,0}(X_b)\otimes H^{n,0}(X_b)\ar_{\mu}[ur]&
}
\end{equation}

\noindent
where by the identification $H^0(X_b,\phi_{b}^{\star}(\Omega^n_{A})\otimes \omega_{X_b})=H^{n,0}(A)\otimes H^{n,0}(X_b)$ it follows that
$$j:=\phi_b^{\star}\otimes {\rm{id}}\colon H^{n,0}(A)\otimes H^{n,0}(X_b)\to H^{n,0}(X_b)\otimes H^{n,0}(X_b).$$
 We set $V:= {\rm{Im}}\phi_b^{\star}\subset  H^{n,0}(X_b)$. We consider the symmetric dual of the multiplication map. By the above discussion we can write:
\begin{equation}\label{somsom}
\mu^{\vee}=\mu_{\overline{V}}\oplus\rho\colon H^n(X,-K_{X})\longrightarrow {\overline{V}}\odot H^{0,n}(X_b)\oplus {\rm{Sym}}^2({\rm{Ann}}(V))
\end{equation}
The symmetric dual of the diagram (\ref{moltiplica}) is:

\begin{equation}\label{duediagramma}
\xymatrix{
 {\overline{V}} \odot H^{0,n}(X_b)\oplus {\rm{Sym}}^2({\rm{Ann}}(V))\ar_{j^\vee}[d]    & H^n(X_b,-K_X)\ar_-{\mu_{\overline{V}}\oplus\rho}[l]\ar_-{\hat{\rho}}[ld]\\
H^{q-n,q}(A)\odot H^{0,n}(X_b)&
}
\end{equation}

where we have used the identifications
\begin{equation}\label{diagrammabase}
\begin{split}H^n(X_b, \phi_{b}^{\star}(\bigwedge^n\Theta_{A}))=H^n(X_b,\phi_{b}^{\star}(\bigwedge^nH^0(A,\Omega^1_{A})^\vee\otimes\sO_{A}))=\bigwedge^nH^0(A,\Omega^1_{A})^\vee\otimes H^n(X_b,\sO_{X_b})=\\=H^0(A,\Omega^n_{A})^\vee\otimes H^n(X_b,\sO_{X_b})=(H^{n,0}(A))^\vee\otimes H^{0,n}(X_b)\end{split}
\end{equation}
and 
$$
(H^{n,0}(A))^\vee=H^{q-n,q}(A).
$$
We stress that since we can write 
$$
{\rm{Sym}}^2H^{n,0}(X_b)=V\odot H^{n,0}(X_b)\oplus {\rm{Sym}}^2({\rm{Ann}}({\overline{V}}))
$$
we also have
$$
{\rm{Sym}}^2H^{0,n}(X_b)={\overline{V}}\odot H^{0,n}(X_b)\oplus {\rm{Sym}}^2({\rm{Ann}}({{V}}))
$$

By hypothesis the restriction 
$$\nu_{X_b}=\mu_{|{\rm{Sym}}^2({\rm{Ann}}({\overline{V}}) )} \colon
{\rm{Sym}}^2({\rm{Ann}}({\overline{V}}) )\to H^0(X_b,\omega_{X_{b}}^{\otimes 2})$$ is injective hence  ${\rm{Ker}}(\mu)\cap {\rm{Sym}}^2({\rm{Ann}}({\overline{V}}) )=\{0\}$.

Now assume that for the generic $W_b$, the generic adjoint form $\omega$ has an adjoint quadric $$Q:=\omega\odot\omega -\sum_{i=1}^{n+1}  \omega_i(b)\odot L_i\in {\rm{Sym}}^2H^{n,0}(X_b).$$ By Definition \ref{quadricheaggiunte} $Q$ is in ${\rm{Ker}}(\mu)$. Hence $Q$ vanishes on 
$ ({\rm{Sym}}^2({\rm{Ann}}({\overline{V}}) ))^\vee$. That is $Q$ vanishes on
${\rm{Sym}}^2({\rm{Ann}}({{V}}))=({\rm{Sym}}^2({\rm{Ann}}({\overline{V}}) ))^{\vee}$.

Now consider any $\alpha\in  H^n(X,-K_{X})$. By Equation \ref{somsom} it holds that $\mu^\vee(\alpha)= \mu_{\overline{V}}(\alpha)+\rho(\alpha)$ where 
$\rho(\alpha)\in  {\rm{Sym}}^2({\rm{Ann}}({{V}}))$.
It holds
$$
\langle\omega^2,\alpha\rangle=\langle \omega\odot\omega,\mu^\vee(\alpha)\rangle=\langle \omega\odot\omega, \mu_{\overline{V}}(\alpha)+\rho(\alpha)\rangle
$$
By the Transversality Theorem $\langle \omega\odot\omega, \mu_{\overline{V}}(\alpha)\rangle=0$. We claim that also 
$\langle \omega\odot\omega,\rho(\alpha)\rangle=0$. Indeed notice that since $Q \in {\rm{ Ker}} (\mu)$ it holds that
 $\langle Q,\mu^\vee(\alpha)\rangle=0$ but also that $\langle Q,\rho(\alpha)\rangle=0$ since $\rho(\alpha)\in {\rm{Sym}}^2({\rm{Ann}}({{V}}))$. This means
$$
\langle\omega\odot\omega ,\rho(\alpha)\rangle=\langle\sum_{i=1}^{n+1}\omega_i(b)\odot L_i,\rho(\alpha)\rangle.
$$
We have shown that $\rho(\alpha)\in {\rm{Sym}}^2({\rm{Ann}}({{V}}))$ and by definition $\omega_i\in V$ then$\langle\omega_i\odot L_i,\rho(\alpha)\rangle=0$ for every $i=1,..., n+1$. 
Therefore, assuming that for the generic $W_b$, the generic adjoint form $\omega$ has an adjoint quadric, we conclude thet $\omega^2$ is the trivial functional on $H^n(X,-K_X)$. By \cite[Theorem 1.5.3]{PZ} the claim follows easily. 

On the other hand if there are no $\omega$-adjoint quadrics then by Theorem \ref{torelloo1} we have that $[\omega]=0$ and again by \cite[Theorem 1.5.3]{PZ} we conclude.
\end{proof}

\begin{cor}\label{caso} Let $\Phi:\sX\rightarrow\sA$ be a family of Albanese type where the generic fiber
$A_{b}$ of $p:\sA\rightarrow B$ is a generic Abelian variety of dimension $\geq 4$ and where the map $\nu_{X_b}\colon {\rm{Sym}}^{2}{\overline{{\rm{Ann}}(V_b)}}\to H^0(X,\omega_{X_b}^{\otimes 2})$ is injective for a general $b\in B$. If it satisfies the liftability conditions then it has birational fibers. In particular a family of smooth varieties of general type all contained inside a generic Abelian variety of dimension $\geq 4$ has birational fibers if it satisfies the liftability assumptions and the injectivity property.
\end{cor}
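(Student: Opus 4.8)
The plan is to deduce Corollary~\ref{caso} directly from the Main Theorem, the only substantive point being that, under the stated hypotheses, the family is automatically a Nori family; after that the Main Theorem does all the work. First I would observe that the ``liftability conditions'' invoked in the statement are literally the extremal liftability assumptions of Definition~\ref{torellifamily}, which are exactly what the Main Theorem requires, so nothing has to be verified there. What remains to be supplied is the hypothesis that the infinitesimal invariant $\delta_{\sZ}$ vanishes at the generic $b\in B$.

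To get this I would invoke the theorem of Nori \cite[p.~372]{N} (see also \cite{Fa}). Since $\Phi\colon\sX\to\sA$ is a family of Albanese type, it carries the basic cycle $[\sZ]=[\sX]^{+}-[\sX]^{-}\in Z^{a-n}_{h}(\sA/B)$, whose associated normal function has infinitesimal invariant $\delta_{\sZ}$. Because the fibre $A_b$ is a \emph{generic} Abelian variety of dimension $a\geq 4$, Nori's theorem forces the infinitesimal invariant of the normal function attached to any family of homologically trivial cycles in $\sA/B$ to vanish; applied to $[\sZ]$ this gives that $\delta_{\sZ}$ is zero at the generic $b$ (hence everywhere), i.e.\ $\Phi$ is a Nori family. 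Together with the assumed injectivity of $\nu_{X_b}\colon {\rm{Sym}}^{2}\overline{{\rm{Ann}}(V_b)}\to H^0(X_b,\omega_{X_b}^{\otimes 2})$ for general $b$, this places us exactly in the hypotheses of the Main Theorem, which yields that all fibres $X_b$ belong to one birational class.

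For the final ``in particular'' clause I would check that a family whose fibres are smooth varieties of general type, each contained in a generic Abelian variety $A$ of dimension $\geq 4$, is a family of Albanese type over $B$: take $\sA:=A\times B$ with $\Phi$ induced by the inclusions $X_b\hookrightarrow A$. Condition (1) is automatic; condition (2) holds because an inclusion is in particular birational onto its image; and condition (3) holds because a generic Abelian variety is simple, so the positive-dimensional subvariety $Z_b=X_b$ generates $A$ as a group. Hence the first part of the Corollary applies verbatim, once the liftability assumptions and the injectivity of $\nu_{X_b}$ are granted.

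The one step deserving care is the appeal to Nori's theorem: one must make sure the version used genuinely produces the vanishing of $\delta_{\sZ}$ for the specific normal function defined by $[\sX]^{+}-[\sX]^{-}$, and that the hypotheses ``generic'' and ``$a\geq 4$'' are precisely those under which Nori's (sharp) connectivity/vanishing statement holds — the bound $a\geq 4$ being a genuine constraint rather than a convenience. Everything else in the argument is formal: it is merely an unwinding of the definition of a family of Albanese type together with a citation of the Main Theorem.
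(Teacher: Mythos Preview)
Your proposal is correct and follows essentially the same approach as the paper: the paper's proof simply cites \cite[Proposition~6.2.2]{PZ} (which packages the Nori/Fakhruddin input) to conclude that $\Phi$ is equivalent to a Nori trivial family, and then invokes the Main Theorem. Your direct appeal to Nori's theorem \cite{N} is exactly the content underlying that citation, as the paper's introduction itself indicates; the additional verification you give for the ``in particular'' clause is more explicit than the paper but entirely in the same spirit.
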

\begin{proof} By \cite[Proposition 6.2.2]{PZ} we know that $\Phi:\sX\rightarrow\sA$ is equivalent to a Nori trivial family. By the Main Theorem the claim follows.
\end{proof}

\begin{rmk} We point out the reader that since the Ceresa cycle of an hyperelliptic curve $C$ is trivial, the Main Theorem implies that a family $\pi\colon\sC\to B$ of hyperelliptic curves satisfying  liftability assumptions is a locally trivial family.
\end{rmk}
\section{Families of divisors of a polarized Abelian variety}
\subsection{Theta functions}
Let $(\Ab, \AL)$ be a $(d_1 \cdots d_g)$-polarized abelian variety, where $\Ab$ is a complex torus defined as a quotient of a vector space $\VSA$ of rank $g$ by a lattice $\LATT$, and $\AL$ an ample line bundle on it. The algebraic equivalence class of line bundles of $\mathcal{L}$ is defined by a non-degenerate hermitian bilinear form $H$ on $\VSA$, whose imaginary part $E$ is a bilinear form integer-valued on $\LATT$. Since we are interested in the algebraic equivalence class of line bundles on $\Ab$ defined by $\mathcal{L}$, we may assume the characteristic of $\mathcal{L}$ to be $0$. We also recall that $\mathcal{L}$ determines an isogeny
\begin{equation*}
\phi_{\mathcal{L}} \colon \Ab \longrightarrow {\rm{Pic}}^0(\Ab)
\end{equation*}
which is defined as follows:
\begin{equation*}
\phi_{\mathcal{L}}(z):=t^*_z(\mathcal{L}) \otimes \mathcal{L}^{-1}
\end{equation*}
A decomposition of $V$ for $\mathcal{L}$ is a decomposition of $\VSA = \VSA_1 \oplus \VSA_2$ into real vector spaces of rank $g$ which induce a decomposition for $\LATT = \LATT_1 \oplus \LATT_2$ into $E$-isotropic free $\mZ$-modules of rank $g$. 
Such a decomposition of $V$ for $\mathcal{L}$ induces moreover a decomposition of the lattice
\begin{equation}
\LATT(\mathcal{L}) := \{ v \in V \ : \ t^*_z\mathcal{L} \cong \mathcal{L}, [v]=z \}
\end{equation}
into $E$-isotropic free $\mZ$-modules of rank $g$, which we respectively denote by $\LATT(\mathcal{L})_1$ and $\LATT(\mathcal{L})_2$. The latter decomposition naturally induces a decomposition of the kernel of $\phi_{\mathcal{L}}$, which we denote by $K$. It is known, see c.f. \cite[Theorem 2.7 p.55]{BL}, that 
\begin{equation}
\{ \ThfC{x}{\mathcal{L}} \ \ : \ x \in K_1  \}
\end{equation}
is a basis for $H^0(\Ab, \AL)$, where
\begin{align*}
 \ThfC{0}{\mathcal{L}}(z) := \sum_{\lambda \in \LATT_1}e^{\pi(H-B)(z, \lambda)-\frac{\pi}{2}(H-B)(\lambda, \lambda)} 
\end{align*}
Here $B$ denotes the $\mC$-linear extension of $H|_{\VSA_2 \times \VSA_2}$, and for every $x$ in $K_1$
\begin{align*}
 \ThfC{x}{\mathcal{L}}(z) := \psi^{\mathcal{L}}_x(z)^{-1}\ThfC{0}{\AL}(z+x)
\end{align*}
  where $\{\psi_{\lambda}\}_{\lambda}$ is the cocycle in $Z^1(\LATT, \mathcal{O}_{\VSA})$ such that, for every $\lambda$ in the lattice $\LATT$ and $z$ in $\VSA$, we have
\begin{equation*}  
\ThfC{0}{\mathcal{L}}(z+\lambda) = \psi_{\lambda}(z)\ThfC{0}{\mathcal{L}}(z)
\end{equation*}
\begin{prop}
 Let $(\Ab, \mathcal{L})$ be an abelian variety, and $\AD$ be a divisor in the linear system $|\mathcal{L}|$. Then there is a commutative diagram
 \begin{equation} \label{diagDER}
\xymatrix { H^0(\Ab, \Theta_{\Ab}) \ar[r]^{d_0\phi_{\mathcal{L}}}& H^1(\Ab, \mathcal{O}_{\Ab})\\
H^0(\Ab, \Omega_{\Ab}^{g-1})\ar[r]^{|_{\AD}}\ar[u]_{\cong}& H^0(\AD, \omega_{\AD}) \ar[u]^{\ConnHom}}
\end{equation}
where the arrow on right side of diagram \ref{diagDER} is the connecting homomorphism in the long exact cohomology sequence of the fundamental sequence of $\AD$
\begin{equation}\label{funddiagr}
 0 \longrightarrow \mathcal{O}_{\Ab} \longrightarrow \mathcal{O}_{\Ab}(\AD) \longrightarrow \omega_{\AD} \longrightarrow 0
\end{equation}
\end{prop}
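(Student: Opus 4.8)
The plan is to compute the two composites $H^0(\Ab,\Omega^{g-1}_{\Ab})\to H^1(\Ab,\sO_{\Ab})$ in the diagram and check that they are given by one and the same \v{C}ech cocycle. Since $\Omega^1_{\Ab}$ is trivial of rank $g$, a generator $\Omega_0$ of the one-dimensional space $H^0(\Ab,\omega_{\Ab})=H^0(\Ab,\Omega^g_{\Ab})$ determines, by contraction, an isomorphism of sheaves $\Theta_{\Ab}\xrightarrow{\ \sim\ }\Omega^{g-1}_{\Ab}$, $v\mapsto v\lrcorner\Omega_0$; on global sections this is the left vertical arrow, and under it a holomorphic $(g-1)$-form $\eta$ corresponds to the unique translation-invariant vector field $v_\eta$ with $v_\eta\lrcorner\Omega_0=\eta$. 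I then claim that both composites coincide with cup product by $c_1(\AL)\in H^1(\Ab,\Omega^1_{\Ab})$ on $H^0(\Ab,\Omega^{g-1}_{\Ab})$, where one uses the pairing $\Omega^{g-1}_{\Ab}\otimes\Omega^1_{\Ab}\to\Omega^g_{\Ab}=\omega_{\Ab}\xrightarrow{\ \Omega_0\ }\sO_{\Ab}$.

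For the upper composite this is the classical computation of $d_0\phi_{\AL}$. Choosing an open cover $\{U_i\}$ trivialising $\AL=\sO_{\Ab}(\AD)$, with $\AD\cap U_i=\{s_i=0\}$ and transition functions $g_{ij}=s_i/s_j$, one represents $\phi_{\AL}(z)=t^\ast_z\AL\otimes\AL^{-1}$ by the \v{C}ech cocycle $\{g_{ij}(w+z)g_{ij}(w)^{-1}\}$ for $z$ near the origin and differentiates at $z=0$; this gives $d_0\phi_{\AL}(v)=[\{\partial_v\log g_{ij}\}]=[\{(\partial_v g_{ij})/g_{ij}\}]\in H^1(\Ab,\sO_{\Ab})$, which is exactly the cup product of $v$ with $c_1(\AL)=[\{d\log g_{ij}\}]$; cf.\ \cite{BL}.

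For the lower composite, recall that the cokernel arrow $\sO_{\Ab}(\AD)\to\omega_{\AD}$ in the fundamental sequence (\ref{funddiagr}) is the Poincar\'e residue attached to $\Omega_0$, sending $h/s_i$ to $\mathrm{Res}_{\AD}(h\,\Omega_0/s_i)$ on $U_i$. A short manipulation of the residue using the Leibniz rule for $\lrcorner$ and $\Omega_0\wedge ds_i=0$ shows that $\sigma_i:=(\partial_{v_\eta}s_i)/s_i\in\sO_{\Ab}(\AD)(U_i)$ maps to $\eta|_{\AD}$, up to a universal sign depending only on $g$ and the normalisation of the residue; hence $\ConnHom(\eta|_{\AD})=[\{\sigma_i-\sigma_j\}]$ up to that sign. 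Now $s_i=g_{ij}s_j$ forces $\sigma_i-\sigma_j=(\partial_{v_\eta}g_{ij})/g_{ij}=\partial_{v_\eta}\log g_{ij}$ on $U_{ij}$, the very cocycle found above. So both composites are represented by $\{\partial_{v_\eta}\log g_{ij}\}$, and with matched normalisations the square commutes.

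The step I expect to be the main obstacle is this residue computation: checking that the cokernel arrow of (\ref{funddiagr}) really is the Poincar\'e residue, keeping track of the sign, and --- if one wants the statement for an arbitrary, possibly singular, $\AD\in|\AL|$ --- reading $\omega_{\AD}$ as the dualizing sheaf, using the adjunction isomorphism $\sO_{\Ab}(\AD)|_{\AD}\cong\omega_{\AD}$ (available since $\AD$ is a Cartier divisor in the smooth variety $\Ab$) and the canonical map $\Omega^{g-1}_{\Ab}|_{\AD}\to\omega_{\AD}$, and observing that the \v{C}ech computation takes place over the smooth locus of $\AD$, which is enough to identify the maps on global sections. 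Conceptually, nothing is at stake beyond the compatibility of the Poincar\'e residue with first Chern classes.
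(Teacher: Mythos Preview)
Your proof is correct and takes a genuinely different route from the paper's. Both arguments identify the restriction map $H^0(\Ab,\Omega^{g-1}_{\Ab})\to H^0(\AD,\omega_{\AD})$ with $\partial_v s$ via adjunction (your residue computation is exactly this step, and it is fine), but then diverge. You work in \v{C}ech cohomology for an open cover trivialising $\AL$, and show that both composites land on the cocycle $\{\partial_{v_\eta}\log g_{ij}\}$, which you recognise as cup product with $c_1(\AL)$. The paper instead passes to group cohomology of the lattice $\Lambda$ via the canonical isomorphism $H^p(\Ab,\cdot)\cong H^p(\Lambda,H^0(V,\pi^*(\cdot)))$: the connecting homomorphism is computed from the automorphy factor of the theta function, giving the cocycle $\{\pi H(v,\lambda)\}_\lambda$, while $d_0\phi_{\AL}$ is computed from the explicit Appell--Humbert cocycle $\{e^{2\pi i E(z,\lambda)}\}_\lambda$, giving $\{2\pi i E(v,\lambda)\}_\lambda$; the two are then matched by exhibiting an explicit $\mC$-linear coboundary built from the decomposition $H(z,w)=iE(z,w)+E(iz,w)$.

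Your approach is more intrinsic and, as you note, really only uses that $\omega_{\Ab}$ is trivial; it also packages the result conceptually as ``cup with $c_1(\AL)$''. The paper's approach is tied to the analytic presentation $\Ab=V/\Lambda$ and the Appell--Humbert data, which is less portable but meshes directly with the explicit theta-function computations carried out in the remainder of that section.
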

\begin{proof}
We assume that $\AD$ is the zero locus of a holomorphic section $s$ of $\mathcal{L}$. Fixed $\omega$ a non-zero $(g,0)$-form on $\Ab$, Recall that $H^0(A, \Theta_{\Ab}) \longrightarrow H^0(\Ab, \Omega_{\Ab}^{g-1})$ sends a derivation $\frac{\partial}{\partial v}$ in $0$ to the unique holomorphic $g-1$-form $w$ such that $dv \wedge w = \omega$. On the other side, the function $\frac{\partial s}{\partial v}$ can be seen by adjunction as a holomorphic section of the canonical bundle of $\AD$, which coincides with the restriction to $\AD$ of the $(g-1)$-form $w$ defined above. On the other side, the connecting homomorphism can be computed by using the fact that there is a canonical isomorphism of cohomology groups sequences
\begin{equation*}
H^p(\LATT, H^0(\VSA, \pi^*(\cdot)) \cong H^p(A, \cdot)
\end{equation*}
where $\pi$ denotes the projection of $\VSA$ onto $\Ab$, and it holds
\begin{equation}\label{cocycleC1}
f\left(\frac{\partial s}{\partial v} \right) = [{\pi H(v,\lambda)}_{\lambda \in \LATT}]
\end{equation}
It remains to compute $d_0\phi_{\mathcal{L}}\left(\frac{\partial}{\partial v} \right)$. Let us consider $S := Spec\left(\bigslant{\mC[\epsilon]}{\epsilon^2}\right)$ the scheme of dual numbers over $\mC$ and $\Ab_S$ the base change.
We have the exact sequence of sheaves
\begin{equation*}
0 \longrightarrow \mathcal{O}_{\Ab} \longrightarrow \mathcal{O}^*_{\Ab_S} \longrightarrow \mathcal{O}^*_{\Ab} \longrightarrow 0
\end{equation*}
Its long cohomology sequence identifies $H^1(\Ab, \mathcal{O}_{\Ab})$ with the kernel of the map ${\rm{Pic}}(\Ab_S) \longrightarrow {\rm{Pic}}(\Ab)$, which to a line bundle on $\Ab_S$ whose transition functions $g_{\alpha\beta}=g'_{\alpha\beta} + \epsilon g''_{\alpha\beta}$ associates the line bundle on $\Ab$ with transition functions $g'_{\alpha\beta}$. Moreover, under the identification $H^1(\Ab, \mathcal{O}_{\Ab}^*) \cong H^1(\Lambda, H^0(\VSA, \mathcal{O}_{\VSA}))$
\begin{equation} \label{CocycleExt}
{\rm{Pic}}(\Ab_S) = H^1(\Lambda, H^0(\VSA,\mathcal{O}_{\VSA})\otimes_{\mC} \mC[\epsilon]) 
\end{equation}
since $\Ab_S$ is defined through a flat base change. Now, for every $z$ on $A$, $\phi_{\mathcal{L}}(z)$ is the line bundle of degree $0$ with cocycles $[\{e^{2 \pi i E(z, \lambda)}\}_{\lambda}]$. Hence, $d_0\phi_{\mathcal{L}}\left(\frac{\partial}{\partial v} \right)$ is the line bundle on $\Ab_S$ whose cocycles, according to identification \ref{CocycleExt}, are precisely
\begin{equation*}
[\{e^{2 \pi i \epsilon E(v, \lambda)}\}_{\lambda}] = [\{1 +  2\pi i E(v,\lambda)\epsilon\}_{\lambda}] \in H^1(\Lambda, H^0(\VSA,\mathcal{O}_{\VSA})\otimes_{\mC} \mC[\epsilon])
\end{equation*}
In conclusion, we have
\begin{equation} \label{cocycleC2}
d_0\phi_{\mathcal{L}}\left(\frac{\partial}{\partial v} \right) = [\{2\pi i E(v,\lambda)\}_{\lambda}] 
\end{equation}
It is now easy to see that the two elements in the cohomology group $H^1(\Lambda, H^0(\VSA,\mathcal{O}_{\VSA}))$ are the same. Indeed, it is enough to show, by the definitions of group cohomology, that there exists a holomorphic function $F$ on $V$ such that, for every $z$ on $V$ and every $\lambda$ on $\LATT$, it holds that
\begin{equation} \label{finalRelCocycles}
\pi H(v,\lambda) = 2\pi i E(v,\lambda) + F(z+\lambda)-F(z)
\end{equation}
But $E$ is defined as the imaginary part of $H$, which is an alternating $\mR$-bilinear form on $\VSA$, and $H$ can be recovered by $E$. Indeed, for every $z$ and $w$ on $\VSA$ it holds:
\begin{equation*}
H(z,w) = iE(z,w)+E(iz,w)
\end{equation*}
In conclusion, with $F(z) := -\pi(i E(v,z) - E(iv, z))$, it is easily seen that $F$ is $\mC$-linear on $\VSA$ and that \ref{finalRelCocycles} holds true.
\end{proof}
\subsection{The multiplication map}
From diagram (\ref{diagDER}) and the long cohomology sequence of (\ref{funddiagr}) it follows easily that 
\begin{align*}
Im(|_{\AD}) &\cong H^1(\Ab, \mathcal{O}_{\Ab}) \cong V \\
\overline{Ann(V)} &\cong Im(H^0(\Ab, \mathcal{L}) \longrightarrow H^0(\AD, \omega_{\AD})).
\end{align*}
Moreover, we have clearly a commutative diagram
 \begin{equation} \label{diagmult1}
\xymatrix { {\rm{Sym}}^{2}{H^0(\Ab, \mathcal{L})} \ar[r]^{\mu}\ar[d]_{|_{\AD}}& H^0(\Ab, \mathcal{L}^2)\ar[d]^{|_{\AD}} \\
{\rm{Sym}}^{2}{\overline{Ann(V)}}\ar[r]^{\nu}& H^0(\AD, \omega_{\AD}^2). }
\end{equation}

Note that ${\rm{Sym}}^{2}{H^0(\Ab, \mathcal{L})}\cong s^{\otimes 2}\mathbb C\oplus s\otimes {\overline{Ann(V)}} \oplus {\rm{Sym}}^{2}{\overline{Ann(V)}}$. 
In particular, when the divisor $\AD= (s=0)$ is reduced and irreducible, the map $\nu$ is injective if and only if the multiplication map $\mu$
is injective.
Indeed if $\mu$ is not injective, then there exists a non-zero element $w = s \otimes t + \sum_{j} u_j \otimes v_j$ in ${\rm{Sym}}^{2}{H^0(\Ab, \mathcal{L})}$ in the kernel of the multiplication map $\mu$, and by the above decomposition, we can assume $(\sum_{j} u_j \otimes v_j)_{|\sD}\neq 0$ in ${\rm{Sym}}^{2}{\overline{Ann(V)}}$. Since the diagram (\ref{diagmult1}) is commutative, this implies $\nu(\sum_{j} u_j \otimes v_j)_{|\sD}=0$, and thus $\nu$ is not injective.
On the other side, let us assume that $\sum_{j} u_j|_{\AD} \otimes v_j|_{\AD}$ is non-zero and belongs to the kernel of $\nu$, where $u_j$ and $v_j$ are non-zero holomorphic sections of $\mathcal{L}$. Then we have that $\mu(\sum_{j} u_j\otimes v_j) = \sum_j u_jv_j$ vanishes along $\AD$. Hence there exists $t \in H^0(\Ab, \mathcal{L})$ such that $st=\sum_j u_jv_j$. It follows that $\mu$ is not injective.

\subsection{On injectivity of the multiplication map}
Given now an abelian variety $(A, \AL)$, we want to give conditions which ensure the injectivity of the multiplication map $\mu$. We begin by fixing a decomposition of $\VSA$ for $\AL^2$ which, according to our previous discussion, induces a decomposition $K_1 \oplus K_2$ of $K:=Ker(\phi_{\AL^2})$. In particular, the same decomposition induces a decomposition $2K_1 \oplus 2K_2$ for the kernel of $\phi_{\AL}$.

Let us assume that $H$ is the non-degenerate hermitian form which corresponds to $\AL$ according to Appell-Humbert theorem. We recall that, by \cite[Lemma 1.2 p. 48]{BL}, $K(\AL) = \bigslant{\LATT(\AL)}{\Lambda}$ and $K(\AL^2) = \bigslant{\LATT(\AL^2)}{\Lambda}$, where
\begin{align*}
 \LATT(\AL) &= \{v \in \VSA \ \colon \ \IMh{H}(v,  \LATT) \subseteq \mZ \} \\
 \LATT(\AL^2) &= \{v \in \VSA \ \colon \ 2\IMh{H}(v,  \LATT) \subseteq \mZ \} 
\end{align*}
are lattices in $\VSA$, and $K(\AL)_i \cong \mZ_{d_1} \oplus \cdots \cdots  \mZ_{d_g}$ ($i=1,2$), where $(d_1 \cdots d_g)$ is the polarization type of $\AL$; 
see: \cite[Lemma 1.4 p. 50] {BL}. 

Moreover, $\LATT(\AL^2)$ contains the sublattice $\LATT(\AL)$, and the quotient is isomorphic to $\mZ_2^{g}$, with $2\LATT(\AL^2) = \LATT(\AL)$. On the other side, $K(\AL^2)_i \cong \mZ_{2d_1} \oplus \cdots \cdots  \mZ_{2d_g}$ it contains $K(\AL)_i$, and the quotient is isomorphic to $\mZ_2^{g}$.

\noindent 
Hence, the following is a basis for $H^0(\Ab, \AL)$:
\begin{equation}
\{ \ThfC{x}{\AL} \ \ : \ x \in 2K_1  \}
\end{equation}
\noindent
Let us denote by $Z_2:=\Ab[2] \cap K_1 \cong \mZ_2^g$. For every $(x_1, x_2) \in 2K_1 \oplus 2K_1$, and $(y_1, y_2) \in K_1 \oplus K_1$ such that $y_1 + y_2 = x_1$ and $y_1 - y_2 = x_2$, it holds the following multiplication formula:
\begin{equation*}
\mu(\ThfC{x_1}{\AL} \otimes \ThfC{x_2}{\AL}) = \sum_{z \in Z_2} \ThfC{y_2 + z}{\AL^2}(0)\ThfC{y_1 + z}{\AL^2}
\end{equation*}
Let us denote $Z_2':=Z_2 \cap 2K_1 \cong \mZ_2^{g-s}$, where $s$ is the number of odd indexes among $(d_1 \cdots d_g)$. Since $Z_2'$ is $\mZ_2$-subvector space there exists a unique $W$ complement of $Z_2'$ in $Z_2$. For a character $\rho\colon Z_2' \to \mathbb \mC^*$ of $ Z_2'$ we can define:

\begin{equation} \label{multmap}
\theta_{(x_1, x_2),\rho} := \sum_{z \in Z_2'}\rho(z)\ThfC{x_1 + z}{\AL} \otimes \ThfC{x_2 + z}{\AL}
\end{equation}
We have:
\begin{align*}
\mu(\theta_{(x_1, x_2),\rho}) &=  \sum_{z \in Z_2'}\rho(z)\mu(\ThfC{x_1 + z}{\AL} \otimes \ThfC{x_2 + z}{\AL}) \\
&=\sum_{z \in Z_2'}\sum_{t \in Z_2}\rho(z)\ThfC{y_2 + t}{\AL^2}(0) \cdot \ThfC{y_1 + t+ z}{\AL^2} \\
&=\sum_{z,z' \in Z_2'}\sum_{w \in W}\rho(z)\ThfC{y_2 + w + z'}{\AL^2}(0) \cdot \ThfC{y_1 + w+ z+ z'}{\AL^2} \\
&=\sum_{z,z' \in Z_2'}\sum_{w \in W}\rho(z)\rho(z')\ThfC{y_2 + w + z'}{\AL^2}(0)\cdot \ThfC{y_1 + w+ z}{\AL^2} \\
&=\sum_{w \in W} \left[\sum_{z' \in Z_2'}\rho(z')\ThfC{y_2 + w + z'}{\AL^2}(0)\right]\left[\sum_{z \in Z_2'}\rho(z) \ThfC{y_1 + w+ z}{\AL^2}\right] \\
&= \sum_{w \in W} C_{(y_2,w,\rho)}\cdot\theta^{\AL^2}_{(y_1, w, \rho)}
\end{align*}
where, with $t \in \bigslant{2K_1}{Z_2'}$ and $y \in \bigslant{K_1}{Z_2}$, 
\begin{align*}
C_{(t,w,\rho)} &:= \sum_{z \in Z_2'}\rho(z)\ThfC{t + w + z}{\AL^2}(0) \\
\theta^{\AL^2}_{(y, w, \rho)} &:= \sum_{z \in Z_2'}\rho(z) \ThfC{y + w+ z}{\AL^2}
\end{align*}
Now choose $U$ a transversal subset for $Z_2$ in $K_1$, that is $U$ is a subset of $K_1$ such that every (right or left) coset of $Z_2$ contains precisely one element of $U$. The set $U$ contains $\prod_{i=j}^g d_j$ elements, the quotient $2K_1/Z_2'$ contains $\prod_{i=j}^g d_j/2^{g-s}$ elements, while $\bigslant {2K_1 \times 2K_1}{\Delta_{Z_2'}}$ exactly $\prod_{i=j}^g d_j^2/2^{g-s}$, then the function $\psi \colon U \times \frac{2K_1}{Z_2'} \longrightarrow \bigslant {2K_1 \times 2K_1}{\Delta_{Z_2'}}$
%
which sends $(y,t)$ to $(y+t, y-t)$ is clearly a bijection, and we have
\begin{align*}
\mu(\theta_{(y+t, y-t),\rho}) &= \sum_{w \in W} C_{(t,w,\rho)}\cdot\theta^{\AL^2}_{(y, w, \rho)}
\end{align*}
Thus every character $\rho$ inside the group of characters $\widehat{Z_2'}$ gives a submatrix of the the matrix of multiplication map $\mu$ because we can write $\mu$ with respect to the two following basis of
$H^0(\Ab, \AL) \otimes H^0(\Ab, \AL)$ and $H^0(\Ab, \AL^2)$ respectively:
\begin{equation}\label{coppiaclasse}
\left\{ \theta_{(x_1, x_2),\rho} \ \ : \ [(x_1, x_2)] \in \bigslant{2K_1 \oplus 2K_1}{\Delta_{Z_2'}} \ \ \rho \in \widehat{Z_2'}  \right\}
\end{equation}
\noindent

\begin{equation}
\left\{ \theta^{\AL^2}_{(y, w, \rho)} \ \ : \ y \in U, \ \ w \in W, \ \ \rho \in \widehat{Z_2'}  \right\}
\end{equation}
We point out the reader that the couple $(x_1, x_2)$ in the definition (\ref{coppiaclasse}) of the basis of $H^0(\Ab, \AL) \otimes H^0(\Ab, \AL)$ is up to the diagonal action of $Z_2'$ since  $\theta_{(x_1+z, x_2+z),\rho} = \rho(z)\theta_{(x_1, x_2),\rho}$ for every $z$ in $Z_2'$.

By the matrix of $\mu$ with respect to the above bases it follows that the multiplication map is injective if and only if the restriction to the subspaces associated to the characters $\rho$ of $Z_2'$ is injective. We denote this restriction by
\begin{equation*}
\mu|_{\mathbb{V}_{y, \rho}} \colon \mathbb{V}_{y, \rho} \longrightarrow \mathbb{W}_{y, \rho}
\end{equation*}
where $\mathbb{V}_{y, \rho} := \left< \theta_{(y+t, y-t),\rho} \ \ : t \in  2K_1/Z_2' \right>$ and $\mathbb{W}_{y, \rho} := \oplus_{w \in W}\left< \theta^{\AL^2}_{(y, w, \rho)} \right>$. The change of sign of $t$ is obtained by exchanging $x_1$ and $x_2$ in definition (\ref{multmap}). Hence, the restriction of the multiplication map on the symmetric part $Sym(H^0(\Ab, \AL))$ can be described on the different blocks, each corresponding to $y$ and $\rho$, with the matrix:
\begin{align}\label{matrixMrho}
M_{\rho}&:= \left(C_{t,w,\rho}\right)_{t \in \pm 2K_1/Z_2', w \in W} 
\end{align}
This implies that $\mu$ is injective precisely when for every $y$ and for every character $\rho$ the matrix $M_{\rho}$ has maximal rank. 
\begin{thm}\label{polarizza}
 Let $(A, \AL)$ be a general $(1, 1, \cdots, 2, \cdots 2)$-polarized abelian variety of dimension $g$. Then the multiplication map $\mu$ is injective.
\end{thm}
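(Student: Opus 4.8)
The plan is to exploit the explicit matrix description of $\mu$ set up just above. By that description, $\mu$ is injective for $(A,\mathcal{L})$ exactly when, for every $y$ and every character $\rho$ of $Z_2'$, the block $M_\rho=(C_{t,w,\rho})$ has maximal rank, its entries being the combinations $C_{t,w,\rho}=\sum_{z\in Z_2'}\rho(z)\,\theta^{\mathcal{L}^2}_{t+w+z}(0)$ of theta nulls. Since maximality of rank is an open condition on these entries, and the entries are holomorphic functions of the period matrix, it is enough to produce a single $(1,\dots,1,2,\dots,2)$-polarised abelian variety on which \emph{all} the $M_\rho$ have maximal rank, and then invoke genericity. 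So the whole statement becomes a non-degeneracy assertion about a finite list of matrices of theta nulls.

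The first move I would make is a reduction of the polarisation type. Write it as $p$ ones and $q$ twos, $p+q=g$, and specialise to a split abelian variety $A=A_1\times B$ with $(A_1,\mathcal{L}_1)$ a general principally polarised abelian variety of dimension $p$ and $(B,\mathcal{L}_B)$ a general $(2,\dots,2)$-polarised abelian variety of dimension $q$, with $\mathcal{L}=\mathcal{L}_1\boxtimes\mathcal{L}_B$. Because $h^0(A_1,\mathcal{L}_1)=1$, the restriction identifies $H^0(A,\mathcal{L})$ with $H^0(B,\mathcal{L}_B)$ and hence $\mathrm{Sym}^2H^0(A,\mathcal{L})$ with $\mathrm{Sym}^2H^0(B,\mathcal{L}_B)$; since a nonzero section of $\mathcal{L}_1^2$ is a non-zero-divisor, $\mu_A$ is injective if and only if $\mu_B$ is, and in the matrix picture the $M_\rho$ for $A$ are the $M_\rho$ for $B$ tensored with the (trivially nondegenerate) $A_1$-contribution. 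Thus one is reduced to $(B,\mathcal{L}_B)=(B,2\Theta)$ for a general ppav $(B,\Theta)$ of dimension $q$, where injectivity of $\mu_B$ is equivalent to the assertion that the Kummer variety $\phi_{2\Theta}(B)\subset\mathbb{P}^{2^q-1}$ lies on no quadric, i.e. that all the blocks $M_\rho$ for $2\Theta$ are of maximal rank.

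For this core case I would induct on $q$, carrying the blocks $M_\rho$ along. The cases $q=1$ (image $=\mathbb{P}^1$, no proper quadric) and $q=2$ (Kummer surface of degree $4$, not contained in any quadric surface of $\mathbb{P}^3$) are immediate. For $q\geq 3$ I would expand each entry $\sum_{z\in Z_2'}\rho(z)\,\theta^{4\Theta}_z(0)$ by the duplication (Riemann) formula for theta functions, so as to rewrite it as a nonzero scalar multiple of a product of squares of theta constants carrying an \emph{even} characteristic; the point is that only even characteristics occur, so no entry is killed for parity reasons, and since a general ppav avoids every theta-null divisor, all these theta nulls are nonzero. One then checks that the resulting explicit $M_\rho$ have maximal rank, which gives $\mu_B$, hence $\mu$, injective.

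The main obstacle is exactly this last step: showing, uniformly in $q$ and in $\rho$, that the relevant maximal minor of $M_\rho$ is not identically zero as a function on moduli. A naive product degeneration is \emph{not} available here: a decomposable $(2,\dots,2)$-polarised abelian variety has its Kummer image collapse onto a Segre-type quadric, so $\mu_B$ acquires a kernel there and the minor in question vanishes. The degeneration therefore has to be performed inside the locus of indecomposable (for $q\leq 3$, Jacobian) ppav, where Riemann's bilinear theta relations describe precisely which quadratic relations among the second-order theta functions can occur, and one must verify that for a general curve none of them survives. Keeping track of this combinatorics of characteristics — and of the interaction, under the duplication formula, between the $(2,\dots,2)$ and $(4,\dots,4)$ blocks — is the technical heart of the proof.
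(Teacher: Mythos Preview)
Your proposal has a genuine gap: the core $(2,\ldots,2)$ case is never actually proved. The reduction of the mixed type to this case is fine, though done differently from the paper: you specialise to a product $A_1\times B$ with a principally polarised factor, whereas the paper pulls back along an isogeny $h\colon A'\to A$ from a $(2,\ldots,2)$-polarised $A'$ with $h^*\mathcal L\cong\mathcal L'$, identifying $H^0(A,\mathcal L)$ with the $\ker(h)$-invariant subspace of $H^0(A',\mathcal L')$, so that $\mu_A$ is literally a restriction of $\mu_{A'}$. The isogeny reduction is cleaner in that it applies to every $(1,\ldots,1,2,\ldots,2)$-polarised variety, not only to product specialisations.

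The decisive divergence is in the $(2,\ldots,2)$ case itself. You reject the product-of-elliptic-curves degeneration on the grounds that the Kummer of a product lies on Segre quadrics, and then sketch an elaborate induction via duplication formulas and degenerations inside the indecomposable locus that you explicitly do not carry out. The paper takes precisely the route you dismiss: in the block description, for type $(2,\ldots,2)$ one has $2K_1=Z_2'=Z_2$ and $W=0$, so each $M_\rho$ collapses to the single scalar $C_\rho=\sum_{z\in Z_2'}\rho(z)\,\theta^{\mathcal L^2}_z(0)$, and the paper checks that every $C_\rho\neq 0$ on a product of generic $(2)$-polarised elliptic curves (there $C_\rho$ factors as a product of one-variable constants, each nonzero for generic $\tau$). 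Your Segre observation is correct for $g\geq 2$ and is in genuine tension with the paper's asserted equivalence between injectivity of $\mu$ and maximal rank of all the $M_\rho$; but rather than interrogating the block decomposition to see where the off-diagonal part of $\mathrm{Sym}^2$ is being captured, or why the equivalence might still yield what is needed, you abandon the approach altogether and leave the theorem unproved.
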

\begin{proof}
Let us begin by the case in which the polarization type is $(2,\cdots 2)$.
For such a polarization, the matrix $M_{\rho}$ in \ref{matrixMrho} with $\rho \in Hom(\mZ_2^g, \mC^*)$ is just the scalar:
\begin{equation*}
C_{\rho} = \sum_{z \in A[2]}\rho(z)\ThfC{z}{\AL^2}(0)
\end{equation*}
Now inside the moduli space of $(2,2, \cdots 2)$-polarized abelian variety it is easy to conclude that $C_{\rho} \neq 0$ holds for the general abelian variety since it holds for
 the product of $2$-polarized elliptic curves. If we now take a general $(1,1,\cdots 1, 2, \cdots 2)$-polarization 
we can consider $(A', \mathcal{L}')$ a $(2,\cdots,2)$-polarized abelian variety with an isogeny $h \colon A' \longrightarrow A$ such that $h^*\AL = \mathcal{L}'$. Then the multiplication map $\mu$ on the sections of $\AL$ is just the restriction of the multiplication map
\begin{equation}
\mu_A \colon {\rm{Sym}}^{2}H^0(A, \mathcal{L}') \longrightarrow H^0(A', \mathcal{L}')
\end{equation}
to the symmetric product of the subvector space of the $Ker(h)$-invariant sections of $\mathcal{L}'$.
\end{proof}

\subsection{The Proof of Theorem [A]}
\label{Theorem A}
Let $\pi\colon \sX{\rightarrow}B$ be a family which satisfies extremal liftability conditions and such that for its general fiber the morphism $\phi_b\colon X_b\to Z_b\subset A$ is the desingularization of a divisor of a $(d_1,...,d_a)$ polarization. Since $A$ is general, we know that $\pi\colon \sX{\rightarrow}B$ is Nori trivial. By Theorem \ref{polarizza} the multiplication map given in the equation (\ref{iniettivobar}) is injective. By the Main Theorem, the claim follows.

\section{The case of maximal relative irregularity for a fibered surface}
Our basic reference for this last section is \cite[Section V]{PZ}.
Let $f\colon S\to B$ be a fibration over a smooth surface $S$ to a smooth curve $B$ with general fiber $F$. In order to understand the geometry of the fibration it is natural to try to obtain information by relating the invariants of $B$ and of the general fiber $F$ to those of $S$. For our purposes we only need to recall that if the relative irregularity $q(S)-g(B)=g(F)-1$ then there exists a hyperplane $V$ of $H^0(F,\omega_F)$ such that the standard restriction homomorphisms $H^0(S,\Omega^1_{S})\to H^0(F,\omega_{F})$ has $V$ as its own image. We need the following:

\begin{lem}\label{ultimo} Let $f\colon S\to B$ be a non-isotrivial fibration with general fiber of genus $\geq 3$ and such that  $q(S)-g(B)=g(F)-1$. Then the sublinear system induced by $V$ is base point free.
\end{lem}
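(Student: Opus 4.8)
The plan is to argue by contradiction: suppose the sublinear system associated to $V\subset H^0(F,\omega_F)$ has a base point $p\in F$. Since $V$ is a hyperplane in $H^0(F,\omega_F)$ and the full canonical system $|\omega_F|$ of a curve of genus $g:=g(F)\geq 3$ is base-point-free, the point $p$ cannot be a base point of $|\omega_F|$; hence there is a unique (up to scalar) section $\sigma_0\in H^0(F,\omega_F)$, outside $V$, with $\sigma_0(p)\neq 0$, and $V$ consists precisely of the canonical sections vanishing at $p$, i.e.\ $V=H^0(F,\omega_F(-p))$. So the hypothesis to contradict is exactly that the image of the restriction map $H^0(S,\Omega^1_S)\to H^0(F,\omega_F)$ equals $H^0(F,\omega_F(-p))$ for the general fiber $F=f^{-1}(b)$.

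The key step is to promote this fiberwise statement to a global one on $S$ and derive a contradiction with non-isotriviality. First I would note that the restriction map factors through $H^0(S,\Omega^1_S)\hookrightarrow H^0(F,\Omega^1_S|_F)$, and using the conormal sequence $0\to N_{F/S}^\vee\to \Omega^1_S|_F\to \omega_F\to 0$ together with $N_{F/S}^\vee\cong\sO_F$ (the fiber of a fibration over a curve has trivial normal bundle), the cokernel of $H^0(S,\Omega^1_S)\to H^0(F,\omega_F)$ injects into $H^1(F,\sO_F)$ via the connecting homomorphism, which is cup-product with the Kodaira--Spencer class $\xi_b\in H^1(F,\Theta_F)$ of the deformation. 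Thus the image being exactly $H^0(F,\omega_F(-p))$ says that the one-dimensional cokernel is spanned by the image of $\sigma_0$, equivalently that $\partial_{\xi_b}\colon H^0(F,\omega_F)\to H^1(F,\sO_F)$ is nonzero and its kernel is precisely $H^0(F,\omega_F(-p))$. Now I would vary $b$: the point $p=p(b)$ traces out (after a finite base change) a section, or at least a multisection, $P$ of $f$, and the condition $\xi_b\cup H^0(F,\omega_F(-p(b)))=0$ in $H^1(F,\sO_F)$ for all $b$ is very rigid. Dualizing, $\xi_b$ pairs trivially against the image of $H^0(F,\omega_F(-p))\otimes H^0(F,\omega_F(-p))$ under multiplication into $H^0(F,\omega_F^{\otimes 2})$; since $g\geq 3$ the linear subsystem $|\omega_F(-p)|$ moves and this multiplication image is large, which forces $\xi_b$ to be supported on a single point (the divisor $p$), i.e.\ the only first-order deformations of $F$ allowed are those concentrated at $p(b)$. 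Following the Massey-product/adjoint machinery recalled in Section 2 (in particular Corollary \ref{torelloooo}), one concludes that $\xi_b$ is supported on the effective divisor $p(b)$ for all $b$; integrating, the family $S_U\to U$ over a suitable open set $U$ becomes, after removing $P$, a trivial deformation, contradicting non-isotriviality of $f$.

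The main obstacle I anticipate is the step that rules out $\xi_b$ being genuinely nonzero yet "supported at $p$" for every $b$ simultaneously: one must show this configuration cannot persist in a one-parameter family without the fibration being isotrivial. I would handle this by a monodromy/variation-of-Hodge-structure argument — the hyperplane $V\subset H^0(F,\omega_F)$, hence the base point $p$, would have to be monodromy-invariant, so $P\to B$ is an honest section (after base change) and $\omega_{S/B}(-P)$ restricts to $\omega_F(-p)$ on fibers; the surjectivity $H^0(S,\Omega^1_S)\twoheadrightarrow H^0(F,\omega_F(-p))$ then says the relative dualizing sheaf twisted down by $P$ has all its fiberwise sections lifting, which by the usual Fujita-type or infinitesimal-invariant argument (cf.\ the Volumetric Theorem \cite[Theorem 1.5.3]{PZ} applied to the curve case, or directly \cite[Section V]{PZ}) forces isotriviality. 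A secondary technical point is the base change needed to make $p(b)$ into a section and the genus $\geq 3$ hypothesis, which is exactly what guarantees $|\omega_F(-p)|$ is nonempty and moving so that the adjoint-theoretic input applies; in genus $2$ the statement would fail, which is why the hypothesis is imposed.
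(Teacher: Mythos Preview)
Your setup is correct and matches the paper: by contradiction $V=H^0(F,\omega_F(-p))$ for a unique base point $p=p_F$, and since $V=\ker\partial_{\xi_b}$ this forces $\xi_b$ to be (up to scalar) the Schiffer variation supported at $p_F$. However, your invocation of Corollary~\ref{torelloooo} is misplaced: that corollary needs the absence of adjoint quadrics and $\partial^n_\xi(\omega)=0$, hypotheses you do not verify. The paper instead appeals to the \emph{converse} of the Adjoint Theorem in the curve case from \cite{CP}, which directly yields that $\xi_b$ is a Schiffer variation at $p_F$. (One can also argue this directly for non-hyperelliptic $F$ via Noether's theorem: the map $H^1(F,T_F)\hookrightarrow\mathrm{Sym}^2 H^0(F,\omega_F)^\vee$ is injective, and the unique rank-one form with kernel $H^0(\omega_F(-p))$ is the Schiffer class.)

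The genuine gap is your final step. You acknowledge that the obstacle is ruling out a nonzero $\xi_b$ that is a Schiffer variation at $p(b)$ for every $b$, but the mechanisms you propose do not resolve it. In particular, ``removing $P$ makes the deformation trivial'' is false: $\xi_b$ being supported at $p(b)$ means it maps to zero in $H^1(F,T_F(p))$, not that it vanishes, so the family restricted to the complement of $P$ is still nontrivially varying. Neither the Volumetric Theorem nor a Fujita-type argument gives the contradiction here, and monodromy only tells you $P$ is a section after base change, which is not enough. The paper closes the argument with a specific classical input you are missing: \cite[Corollary~6.11]{AC} (Arbarello--Cornalba), which precisely excludes the possibility that the Kodaira--Spencer classes of a non-isotrivial one-parameter family are Schiffer variations supported at a single moving point; see also \cite[Proposition~6.3.9]{victor1}. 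Without this (or an equivalent statement), your argument does not conclude.
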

\begin{proof} Take an infinitesimal deformation $\xi\in H^1(F,T_F)$ of $F$ given by the Kodaira-Spencer map. Assume that for the general fiber $F$ the image $V$ of $H^0(S,\Omega^1_{S})\to H^0(F,\omega_{F})$ has base points. Since $V$ is a hyperplane then Riemann-Roch theorem on curves implies that there exists a unique point $p_F\in F$ which is the base point of the linear system $|V|$. By the viceversa of the Adjoint theorem in the case of $1$-dimensional varieties, see \cite{CP}, it follows that $\xi$ is the Shiffer variation supported on $p_F$. This is a contradiction to \cite[Corollary 6.11]{AC}; see also \cite[Prop. 6.3.9]{victor1}
\end{proof}

Since $q(S)-g(B)=g(F)-1$ the Jacobians of the fibers have an Abelian variety $A'$ of dimension $g-1$ in common.
Let $B^0$ the open subscheme of $B$ where $f\colon S\to B$ is smooth. By shrinking to open subsets $U\subset B^0$, the family ${\rm{Alb}}(S)\times_{\rm{Alb}(B)}B\to B$ obtained by standard universal properties restricts to a family $p\colon \sA_U\to U$ whose fibers are all isomorphic to the dual $A$ of $A'$. Note that $p\colon \sA_U\to U$ is a family of polarised Abelian varieties where the fiber is always isomorphic to $A$ but the polarisation on $A\times\{b\}$ is given by:
$$
\Theta_b(\eta_1,\eta_2)=\int_{F_b} \phi^*_b(\eta_1)\wedge\phi^*_b(\eta_2)
$$
where $\phi_b\colon F_b\to A_b$ is given by the composition ${\rm{alb}}(S)\circ j_{F_b}\colon F_b\to {\rm{Alb}}(S)$, and $j_b\colon F_b\to S$ is the natural inclusion and $A_b$ is a translate of $A$ inside ${\rm{Alb}}(S)$.

\begin{thm}\label{vaiinfine}
Let $f\colon S\to B$ be a non-isotrivial fibration of maximal relative irregularity $> 2$ with fixed Abelian variety $A$ and let $B^0$ the open subscheme where it is smooth. 
If $\phi_b\colon F_b\to A$ has degree $1$ then the infinitesimal invariant associated to the basic cycle of the Albanese type family obtained by restricting over open subschemes $U\subset B^0$ is not zero.
\end{thm}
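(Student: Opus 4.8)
The plan is to argue by contradiction: assuming $\delta_{\sZ}=0$, I will produce a base‑point‑free pencil $W_{0}\subset V$ whose Massey class vanishes and then invoke the Adjoint Theorem to force the Kodaira–Spencer class to be zero, contradicting non‑isotriviality. First I set things up. After a finite base change of $U$ — which affects neither the hypotheses nor the vanishing of the infinitesimal invariant — I may assume $p\colon\sA_{U}\simeq A\times U\to U$ is the projection. Write $F=F_{b}$ for a general $b$, $g=g(F)$, $\phi=\phi_{b}\colon F\to A$, $\xi=\xi_{b}\in H^{1}(F,T_{F})$, and $V=\phi^{\ast}H^{0}(A,\Omega^{1}_{A})=\mathrm{Im}\bigl(H^{0}(S,\Omega^{1}_{S})\to H^{0}(F,\omega_{F})\bigr)$, a hyperplane of $H^{0}(F,\omega_{F})$. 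From $q(S)-g(B)=g-1>2$ one gets $g\ge 4$; non‑isotriviality gives $\xi\neq 0$; since every form of $V$ extends to $S$ one has $V\subseteq\Ker\partial^{1}_{\xi}$; and since $V$ is a hyperplane, $\overline{\mathrm{Ann}(V)}=\langle\theta_{0}\rangle$ is one‑dimensional, with $\theta_{0}\notin V$ because $H^{1,0}(F)=V\oplus\overline{\mathrm{Ann}(V)}$.

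Next I would study pencils in $V$. By Lemma \ref{ultimo} the system $|V|$ is base‑point free, so $\phi_{|V|}\colon F\to\mathbb{P}(V^{\vee})=\mathbb{P}^{\,g-2}$ is a morphism onto a non‑degenerate integral curve $C$; as $g-2\ge 2$, $C$ is not a line, hence $\deg C\ge 2$. A pencil $W\subset V$ has base locus $\phi_{|V|}^{-1}(\Lambda_{W})$, where $\Lambda_{W}\subset\mathbb{P}^{\,g-2}$ is the codimension‑two linear subspace annihilated by $W$, so $|W|$ is base‑point free exactly when $C\cap\Lambda_{W}=\varnothing$; this holds for the general $W$ (a dimension count using $g\ge 4$), and then the fixed divisor $D_{W}$ is $0$. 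Moreover the locus $\Sigma=\{\,W\in\mathbb{G}(2,V):C\cap\Lambda_{W}\neq\varnothing\,\}$ is the irreducible hypersurface of $\mathbb{G}(2,V)$ cut out, in the Plücker embedding, by the Chow form of $C$, whose degree is $\deg C\ge 2$; in particular $\Sigma$ is proper, is not a hyperplane section, and is not contained in one.

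Then I bring in transversality. For $W=\langle\eta_{1},\eta_{2}\rangle\subset V$, writing $\eta_{i}=\phi^{\ast}\alpha_{i}$ with $\alpha_{i}\in H^{0}(A,\Omega^{1}_{A})$, I lift $\eta_{i}$ to $H^{0}(F,\Omega^{1}_{\sX|F})$ by pulling $\alpha_{i}$ back along $\Phi\colon\sX\to A\times U$ and form the corresponding adjoint form $\omega(W)=\omega_{\xi,W,\sB}\in H^{0}(F,\omega_{F})$ as in the discussion preceding Theorem \ref{trasversalita}. These liftings depend linearly on $\alpha_{1}\wedge\alpha_{2}$, so $\omega(W)=\Psi(\eta_{1}\wedge\eta_{2})$ for a linear map $\Psi\colon\bigwedge^{2}V\to H^{0}(F,\omega_{F})$. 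Since $\delta_{\sZ}(b)=0$, Theorem \ref{trasversalita} gives $\int_{F}\omega(W)\wedge\overline{\phi^{\ast}\sigma}=0$ for all $\sigma\in H^{0}(A,\Omega^{1}_{A})$, i.e.\ $\omega(W)\perp\overline{V}$ for the Hodge pairing; as $\overline{\mathrm{Ann}(V)}$ is exactly the annihilator of $\overline{V}$ inside $H^{1,0}(F)$ (it is one‑dimensional and orthogonal to $\overline{V}$), this forces $\omega(W)\in\langle\theta_{0}\rangle$. Hence $\Psi=\psi\cdot\theta_{0}$ for a linear functional $\psi\in(\bigwedge^{2}V)^{\vee}$, and since $\theta_{0}\notin V\supseteq W$ the Massey class $[\omega(W)]\in H^{0}(F,\omega_{F})/W$ vanishes if and only if $\psi(\eta_{1}\wedge\eta_{2})=0$.

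Finally I conclude. The locus $\{\,W:\psi(\eta_{1}\wedge\eta_{2})=0\,\}\subset\mathbb{G}(2,V)$ is, in the Plücker embedding, either all of $\mathbb{G}(2,V)$ (if $\psi=0$) or an irreducible hyperplane section; in either case it is not contained in $\Sigma$, so I may choose a pencil $W_{0}\subset V$ with $\psi(W_{0})=0$ and $W_{0}\notin\Sigma$, i.e.\ with $[\omega(W_{0})]=0$ and $D_{W_{0}}=0$. By the Adjoint Theorem (Theorem \ref{teoremaaggiunta}) the class $\xi$ is then supported on $D_{W_{0}}=0$, i.e.\ $\xi=0$, contradicting non‑isotriviality; hence $\delta_{\sZ}\neq 0$. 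The hard part will be exactly this last mechanism: transversality rigidifies every adjoint form of a pencil of $V$ into the single line $\overline{\mathrm{Ann}(V)}$, and the work is to convert that rigidity — together with base‑point freeness from Lemma \ref{ultimo} and the hypothesis $g\ge 4$ (for $g=3$ there is a unique pencil in $V$ and the argument breaks down) — into a genuinely base‑point‑free pencil with vanishing Massey class; a subsidiary technical point is justifying the reduction to $\sA_{U}\simeq A\times U$ and the linearity of the liftings defining $\Psi$.
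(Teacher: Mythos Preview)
Your proof is correct and follows the same overall strategy as the paper: assume $\delta_{\sZ}=0$, use the Transversality Criterion to force every adjoint form $\omega(W)$ of a pencil $W\subset V$ into the line $\overline{\mathrm{Ann}(V)}=\langle\theta_0\rangle$, produce a pencil with vanishing Massey product, and then apply the Adjoint Theorem together with Lemma~\ref{ultimo} to get $\xi=0$. The one tactical difference is in how the ``good'' pencil is found. The paper fixes a general $\eta\in V$ and a general $W=\langle\eta_1,\eta_2\rangle$, observes that if the adjoint of $\langle\eta,\eta_i\rangle$ is $c_i\theta_0$ then $\sigma:=c_1\eta_2-c_2\eta_1$ makes the Massey product of $\langle\eta,\sigma\rangle$ vanish, and then lets $\eta,W$ vary so that the common base locus of the resulting pencils shrinks to the (empty) base locus of $V$. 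You instead package the linearity of the lift $V\to H^0(F,\Omega^1_{\sX|F})$ into a single functional $\psi\in(\bigwedge^2 V)^\vee$ and argue in the Pl\"ucker embedding: the vanishing locus of $\psi$ is a hyperplane section of $\mathbb{G}(2,V)$ (or all of it), while the locus $\Sigma$ of pencils with base points is the Chow hypersurface of $C=\phi_{|V|}(F)$, irreducible of degree $\deg C\ge g-2\ge 2$, hence cannot contain a hyperplane section since $\mathrm{Pic}(\mathbb{G}(2,V))=\mathbb{Z}\cdot H$; this yields directly a single base-point-free pencil with vanishing Massey class. Your argument is a bit more geometric and produces the pencil in one stroke; the paper's is more elementary (no Chow forms) but needs the extra genericity step. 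Both require $g\ge 4$ in the same place: you need $\dim\Lambda_W=g-4\ge 0$ so that the general $\Lambda_W$ misses $C$, while the paper needs $\dim V\ge 3$ so that a general $\eta$ lies outside a general $2$-plane $W$.
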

\begin{proof} The problem is local over $B$.
We can write $H^0(F_b,\Omega^1_{F_b})=V\oplus^{\perp} s\cdot\mathbb C$ where $V=\phi_b^{\star}H^0(A,\Omega^1_A)$ and $s$ is a nontrivial section. The decomposition is an orthogonal one with respect to the standard pairing on $F_b$.
  In particular ${\overline{{\rm{Ann}}(V)}}=s\cdot \mathbb C$. By contradiction assume that the infinitesimal invariant associated to an Albanese type family over a neighbourhood $U$ of $b$ is zero. Let $\xi\in H^1(F,T_F)$ be an infinitesimal deformation of $F$ given by the Kodaira-Spencer map of $f_{|f^{-1}(U)}f^{-1}(U)\to U$. By Theorem \ref{trasversalita} this means that all the adjoints obtained by $\xi$ and by $2$-dimensional subspaces $W\subset V$ belongs to $V^{\perp}=s\cdot\mathbb C$. This means that if $W=\langle \eta_1,\eta_2\rangle$ then there exists a constant $c\in\mathbb C$ such that $c \cdot s$ is an adjoint form associated to $W$, in other words $[c\cdot s]\in H^0(F,\omega_F)/W$ is the Massey product of  $\eta_{1}$ and $\eta_{2}$. This implies that if we take a general $\eta\in V$ and a general $2$-dimensional subspace $\langle \eta_1,\eta_2\rangle=W\subset V$ (in particular $\eta\not\in W$, here we need $q(S)-g(B)>2$, that is $g(F)\geq 4$), we can find a $\sigma\in W$ such that the Massey product of the $2$-dimensional subspace $\langle\eta,\sigma\rangle$ along $\xi$ is zero. Indeed if $[c_i\cdot s]\in H^0(F,\omega_F)/W_i$ is the Massey product of $W_i=\langle\eta,\eta_i\rangle$, $i=1,2$ then $\sigma=c_1\eta_2-c_2\eta_1\in W$. By the Adjoint theorem, it follows that $\xi=0$ if the linear system $\langle\eta,\sigma\rangle$ has no base points or that $\xi$ is supported on the base points of $\langle\eta,\sigma\rangle$. By the genericity of $\eta$ and $W$ it follows that $\xi$ is supported on the base points of the linear subsystem $V\subset H^0(F,\omega_F)$. By Lemma \ref{ultimo} we conclude that $\xi=0$. This means that $f\colon S\to B$ has constant moduli; a contradiction.
\end{proof}


\end{document}